\documentclass[10pt]{article}
\usepackage{amsfonts}
\usepackage{amsthm}
\usepackage{amssymb}
\usepackage{mathrsfs}
\usepackage{geometry}
\usepackage{graphicx}
\usepackage[utf8]{inputenc}
\usepackage[T1]{fontenc}
\usepackage{amsmath}
\usepackage{float}

\usepackage{authblk}

\usepackage{hyperref}
\hypersetup{backref, pdfpagemode=FullScreen, colorlinks=true,
	citecolor=magenta, linkcolor=cyan, urlcolor=blue}

\usepackage{mathtools}
\mathtoolsset{showonlyrefs}

\usepackage{version}

\theoremstyle{change}
\newtheorem{theo}{Theorem}[section]
\newtheorem*{theo*}{Theorem}

\newtheorem{prop}[theo]{Proposition}

\begin{document}
	\title{A note on the maximal perimeter and maximal width of a convex small polygon}
	\author{Fei Xue\footnote{Fei Xue: 05429@njnu.edu.cn\\School of Mathematical Sciences, Nanjing Normal University, No.1 Wenyuan Road Qixia District, Nanjing, P.R.China 210046}, Yanlu Lian\footnote{Yanlu Lian: yanlu\_lian@tju.edu.cn\\Center for Applied Mathematics, Tianjin University, Tianjin, P.R.China 300354}, Jun Wang\footnote{Jun Wang: 18463756956@163.com\\Center for Applied Mathematics, Tianjin University, Tianjin, P.R.China 300354}, Yuqin Zhang\footnote{Yuqin Zhang: yuqinzhang@tju.edu.cn\\School of Mathematics, Tianjin University, Tianjin, P.R.China 300072}}
	\maketitle
	\begin{abstract}
		The polygon $P$ is small if its diameter equals one. When $n=2^s$, it is still an open problem to find the maximum perimeter or the maximum width of a small $n$-gon. Motivated by Bingane's series of works, we improve the lower bounds for the maximum perimeter and the maximum width.
	\end{abstract}
	
\section{Introduction}

Let $P$ be a convex polygon. The diameter of $P$ is the maximum distance between pairs of its vertices. The polygon $P$ is small if its diameter equals one. The diameter graph of a small polygon is defined as the graph with the vertices of the polygon, and an edge between two vertices exists only if the distance between these vertices equals one. Diameter graphs of some convex small polygons are represented in Figure 1, Figure 2, and Figure 3. The blue lines are sides, and the black lines are edges. The height associated to a side of $P$ is defined as the maximum distance between a vertex and the line containing the side. The minimum height for all sides is the width of the polygon $P$.

\begin{figure}[htbp]
\begin{minipage}[htbp]{0.55\linewidth}
\centering
\includegraphics[height=6cm,width=6cm]{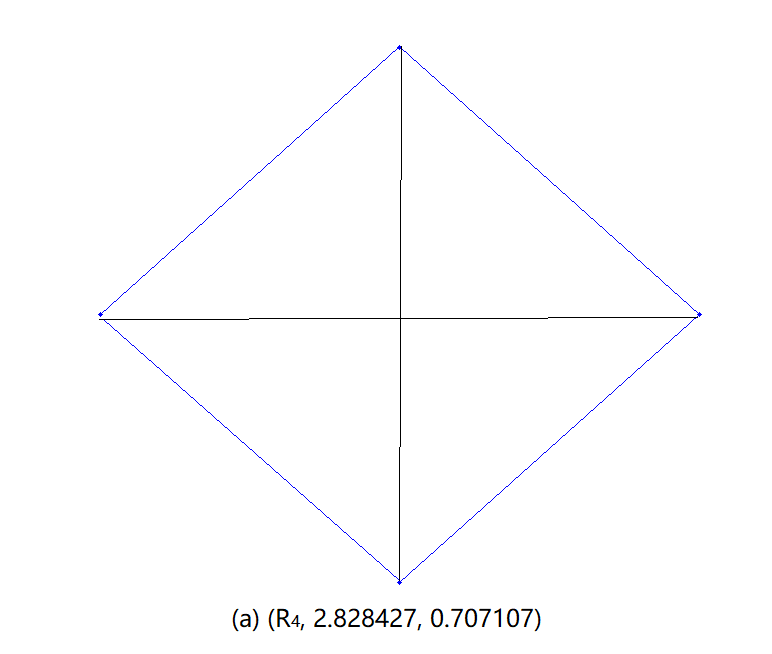}
\end{minipage}%
\hfill
\begin{minipage}[htbp]{0.55\linewidth}
\centering
\includegraphics[height=6cm,width=6cm]{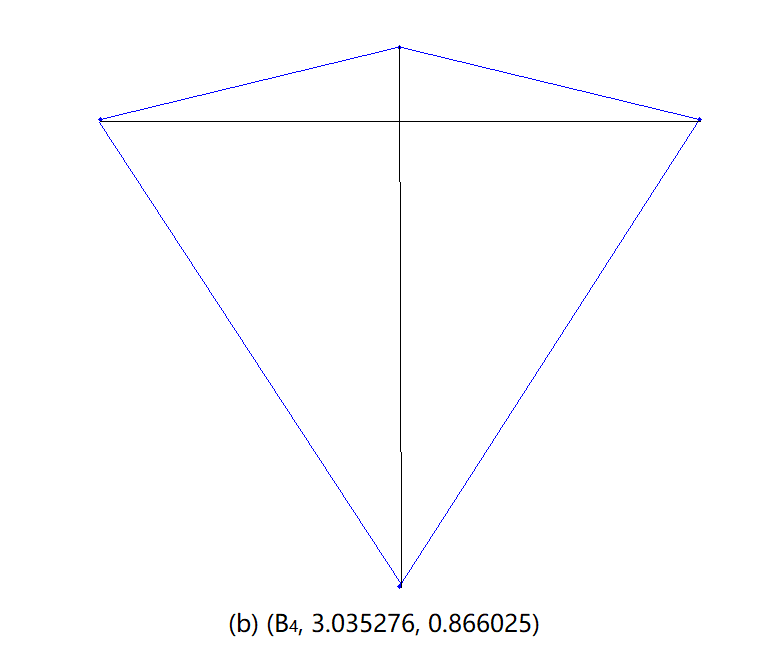}
\end{minipage}
\caption{Two convex small $4$-gon $(P_4, L(P_4), W(P_4)):$ (a) Regular $4$-gon; (b) $4$-gon of maximal perimeter and maximal width.}
\end{figure}
For a given integer $n\geq 3$, we are looking forward for the maximal perimeter and the maximal width of
a convex small $n$-gon. This problem is firstly studied by Reinhardt~\cite{Reinhardt} and Datta~\cite{Datta}.
They showed that, for an integer $n\geq 3$, the value $2n\sin\frac{\pi}{2n}$ is an upper bound
on the perimeter $L(P_n)$ of a convex small $n$-gon $P$. For the width part, it was first investigated
by A. Bezdek and F. Fodor~\cite{Bezdek} that the value $\cos\frac{\pi}{2n}$ is an upper bound on its width $W(P_n)$.
For $n=2^s$ with $s\geq 2$, Bingane~\cite{Bingane1} constructed a family of convex small $n$-gon,
whose perimeters and widths differ from the upper bounds $2n \sin\frac{\pi}{2n}$ and $\cos\frac{\pi}{2n}$ by just $O(\frac{1}{n^6})$ and $O(\frac{1}{n^4})$, respectively. For $n=2^s$ with $s\geq 2$, Bingane~\cite{Bingane2} constructed a family of convex small n-gon, and showed that the perimeters and the widths obtained cannot be improved for large $n$ by more than $\frac{a}{n^6}$ and $\frac{b}{n^4}$ respectively, for certain positive constants $a$ and $b$. For $n=2^s$ with $s\geq 4$, Bingane~\cite{Bingane4} further tighten lower bounds on the maximal perimeter and the maximal width.
	
Inspired by Bingane's works, we furtherly study the construction of small polygons. Our main result is:

\begin{theo}\label{theo:main}
	Suppose $n=2^s$ with integer $s\geq 4$. Let $\overline{L}_n=2n\sin\frac{\pi}{2n}$ denote an upper bound on the perimeter $L(P_n)$ of a convex small $n$-gon $P_n$, and let $\overline{W}_n=\cos\frac{\pi}{2n}$ denote an upper bound on its width $W(P_n)$. Let
	$$\sigma_n=\min_{b_k=\pm 1}\frac{\mid\sum_{k=0}^{\frac{n}{4}-1}b_k\sin\left(\left(2k+1\right)\cdot\frac{\pi}{n}\right)\mid}{\sum_{k=0}^{\frac{n}{4}-1}\cos\left(\left(2k+1\right)\cdot\frac{\pi}{n}\right)},$$
and let $\delta_n\in \left(0, \frac{\pi}{n}\right)$ be the solution of
	$$\sigma_n\left(\cos x-\cos\frac{\pi}{n}\right)=\sin x,$$
    i.e., $$\delta_n=\arccos\left(\frac{\sigma_n}{\sqrt{1+\sigma_n^2}}\cos{\frac{\pi}{n}}\right)-\arccos\frac{\sigma_n}{\sqrt{1+\sigma_n^2}}.$$
    Then there exists a convex small $n$-gon $D_n$ such that
      $$L(D_n)= 2n\sin\frac{\pi}{2n}\cos\frac{\delta_n}{2},$$
   $$W(D_n)= \cos\left(\frac{\pi}{2n}+\frac{\delta_n}{2}\right),$$
and
$$\overline{L}_n-L(D_n)\leq O\left(\frac{\pi^{\log_2n^2}}{n^{\log_2n+5}}\right).$$
$$\overline{W}_n-W(D_n)\leq O\left(\frac{\pi^{\log_2n^2}}{n^{\log_2n+7}}\right).$$

\end{theo}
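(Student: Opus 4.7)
The plan is to construct $D_n$ explicitly as a tilted perturbation of the regular $n$-gon and then read off its perimeter, width and diameter condition from the construction. Following the template that Bingane develops for $n=2^s$, I would place the $n$ vertices so that the diameter graph of $D_n$ is a Hamiltonian path of $n-1$ unit-length segments whose polar angles are $(2k+1)\pi/n$ for $k=0,\dots,n/4-1$, each tilted by the small half-angle $\delta_n/2$ in a direction $b_k\in\{\pm 1\}$. The signs $(b_k)$ are chosen to realise the minimum in the definition of $\sigma_n$, which minimises the net horizontal drift produced by the tilts and so gives the tightest construction under the unit-diameter constraint. A direct computation in coordinates shows, by the resulting symmetry, that every side of $D_n$ has the common length $2\sin(\pi/(2n))\cos(\delta_n/2)$, giving $L(D_n)=2n\sin(\pi/(2n))\cos(\delta_n/2)$, while the minimum of the $n$ side-heights is attained on a tilted side and equals $\cos(\pi/(2n)+\delta_n/2)$.

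Next I must verify that $D_n$ is in fact small, that is, that all pairwise vertex distances are at most $1$. The Hamiltonian-path structure of the diameter graph, the optimality of the sign choice, and the $n=2^s$ symmetry together reduce this to a single scalar inequality: the squared length of the longest non-unit diagonal expands to a combination in which the coefficient of $\cos\delta-\cos(\pi/n)$ is $\sigma_n$ and the coefficient of $-\sin\delta$ is $1$. Thus the smallness constraint is exactly $\sigma_n(\cos\delta-\cos(\pi/n))\le \sin\delta$, which by the definition of $\delta_n$ holds with equality at $\delta=\delta_n$, so $D_n$ is admissible and the exact perimeter and width formulas are established.

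For the asymptotic bounds, Taylor-expansion of the two exact formulas about $\delta_n=0$, together with the defining transcendental equation of $\delta_n$, reduces $\overline{L}_n-L(D_n)$ and $\overline{W}_n-W(D_n)$ to expressions in $\sigma_n$ and elementary functions of $n$. The entire asymptotic statement therefore hinges on a sharp upper bound for $\sigma_n$, and this combinatorial estimate is the main obstacle. I would exploit the doubling structure $n=2^s$ by constructing the optimal sign vector recursively so that the partial sum cancels at each of the $\log_2 n$ levels of a binary tree, using the product-to-difference identity $\sin((2k+1)\pi/n)-\sin((2k+3)\pi/n)=-2\cos((2k+2)\pi/n)\sin(\pi/n)$ to propagate an extra factor of order $\pi/n$ at each level of the recursion. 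Careful bookkeeping of the combinatorial prefactors through this induction is the technical crux; once the resulting bound on $\sigma_n$ is in hand, substitution into the Taylor estimates yields both asymptotic inequalities claimed in the theorem.
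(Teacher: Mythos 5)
Your skeleton matches the paper's route: a sign-parameterized tilted construction, the transcendental equation $\sigma_n\left(\cos x-\cos\frac{\pi}{n}\right)=\sin x$ defining $\delta_n$, and a recursive cancellation estimate for the signed sine sum exploiting $n=2^s$. But as a proof it has concrete gaps. First, the claim that ``every side of $D_n$ has the common length $2\sin\frac{\pi}{2n}\cos\frac{\delta_n}{2}$'' is false for this type of construction: the half-turning angles alternate as $\frac{\pi}{n}\pm\delta_n$, so the side lengths alternate between $2\sin\left(\frac{\pi}{2n}+\frac{\delta_n}{2}\right)$ and $2\sin\left(\frac{\pi}{2n}-\frac{\delta_n}{2}\right)$, and only the sum over a consecutive pair equals $4\sin\frac{\pi}{2n}\cos\frac{\delta_n}{2}$; your own width formula $\cos\left(\frac{\pi}{2n}+\frac{\delta_n}{2}\right)$, which comes from the larger angle, contradicts equilaterality, so the ``direct computation'' you invoke cannot be the one described. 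Second, in the paper the defining equation for $\delta_n$ is not a longest-diagonal inequality but a closure condition: symmetry about the edge $v_0$--$v_1$ plus the requirement that the middle unit diameter be horizontal with endpoints at $x=\pm\frac12$ yields $\sum_k b_k\sin\left((2k+1)\frac{\pi}{n}\right)\left(\cos\delta-\cos\frac{\pi}{n}\right)+\sum_k b_kc_k\cos\left((2k+1)\frac{\pi}{n}\right)\sin\delta=0$, after which the signs are chosen to minimize the ratio. Your reduction of smallness to ``the squared length of the longest non-unit diagonal'' having exactly coefficients $\sigma_n$ and $1$ is asserted, not derived, and it is not clear it produces this equation; the admissibility (all pairwise distances at most $1$, convexity) of the whole polygon is left unverified.

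The more serious gap is at what you yourself call the crux: the quantitative bound on $\sigma_n$. You correctly identify the doubling mechanism (this is exactly the paper's recursion $M_{4n}\le 4\sin\frac{\pi}{4n}\sin\frac{\pi}{2n}\,M_n$, obtained from the block sign pattern $(-1,1,1,-1)$ and product-to-difference identities, with $M_n=\min_{b_k=\pm1}\left|\sum_k b_k\sin\left((2k+1)\frac{\pi}{n}\right)\right|$), but you never carry the induction out, never state the resulting bound $M_n\le\frac{\sqrt2}{\pi^2}\cdot\frac{\pi^{\log_2 n}}{n^{(\log_2 n-1)/2}}$, and never propagate it through $\sigma_n=2\sin\frac{\pi}{n}\,M_n$ and $\delta_n\lesssim 2\sin^2\frac{\pi}{2n}\,\sigma_n$ to reach the exponents $\log_2 n+5$ and $\log_2 n+7$ in the statement. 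Since the entire content of the theorem beyond the exact formulas is precisely these exponents, deferring the ``careful bookkeeping'' leaves the asymptotic claims unproved; the plan is the right one, but the decisive estimate still has to be executed.
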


	This paper is organized as follows. In Section 2, we briefly discuss the basic properties of small polygons
	of maximal perimeter and of maximal width, and then describe the corresponding optimization problem.
	In Section 3, we study the optimization problem and construct the candidates mentioned in Theorem~\ref{theo:main},
	and then prove Theorem~\ref{theo:main}.
	
\section{Perimeters and widths of convex small polygons}

\subsection{History}

Let $L(P)$ denote the perimeter of a polygon $P$ and $W(P)$ its width. It is pointed out by Reinhardt~\cite{Reinhardt} in 1922, and later by Datta~\cite{Datta} in 1997, that the regular small $n$-gon $R_n$ attains
the maximal perimeter $L(R_n)=2n\sin\frac{\pi}{2n}$ and the maximal width $W(R_n)=\cos\frac{\pi}{2n}$ for $n$ odd, and the maximal perimeter $L(R_n)=n\sin\frac{\pi}{n}$ and the maximal width $W(R_n)=\cos\frac{\pi}{n}$ for $n$ even.
When $n$ has an odd factor $m$, consider the family of convex equilateral small $n$-gon constructed as follows:

1. Transform the regular small $m$-gon $R_m$ into a Reuleaux $m$-gon by replacing each edge by a circle’s arc passing through its end vertices and centered at the opposite vertex;

2. Add at regular intervals $\frac{n}{m}-1$ vertices within each arc;

3. Take the convex hull of all vertices. These $n$-gon are denoted by $R_{m,n}$ and
$$L(R_{m,n})=2n\sin\frac{\pi}{2n},$$
$$W(R_{m,n})=\cos\frac{\pi}{2n}.$$

\begin{figure}[htbp]
\begin{minipage}[htbp]{0.55\linewidth}
\centering
\includegraphics[height=6cm,width=6cm]{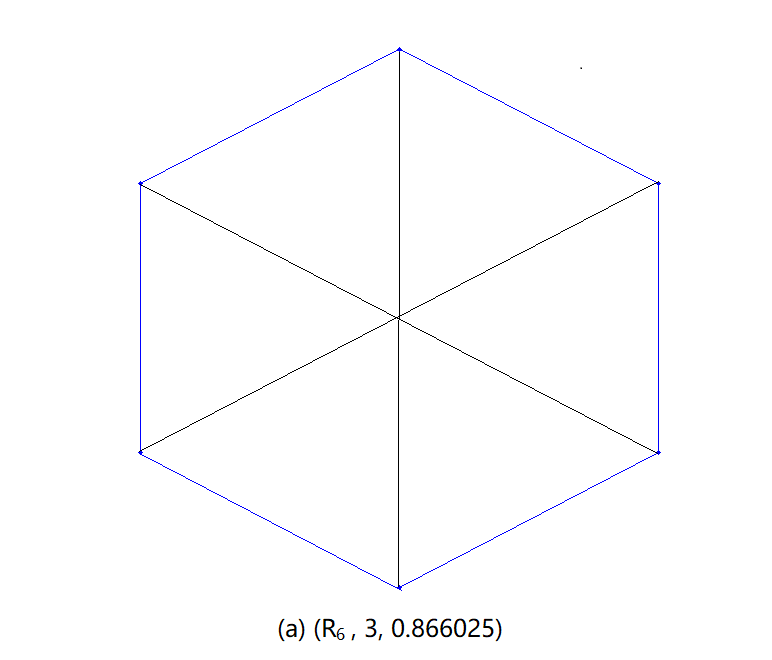}
\end{minipage}%
\hfill
\begin{minipage}[htbp]{0.55\linewidth}
\centering
\includegraphics[height=6cm,width=6cm]{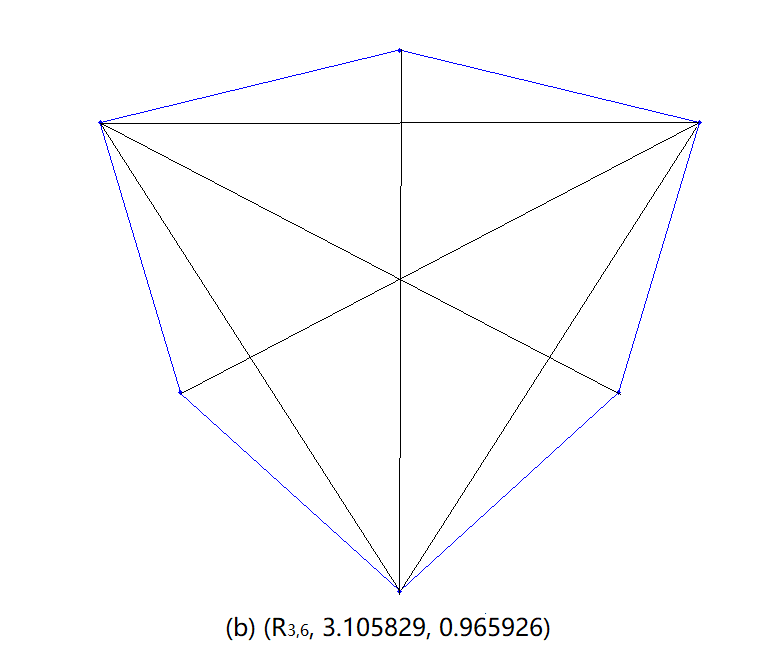}
\end{minipage}
\caption{Two convex small $6$-gon $(P_6, L(P_6), W(P_6)):$ (a) Regular $6$-gon; (b) Reinhardt $6$-gon.}
\end{figure}

\begin{figure}[htbp]
\begin{minipage}[htbp]{0.33\linewidth}
\centering
\includegraphics[height=4cm,width=5cm]{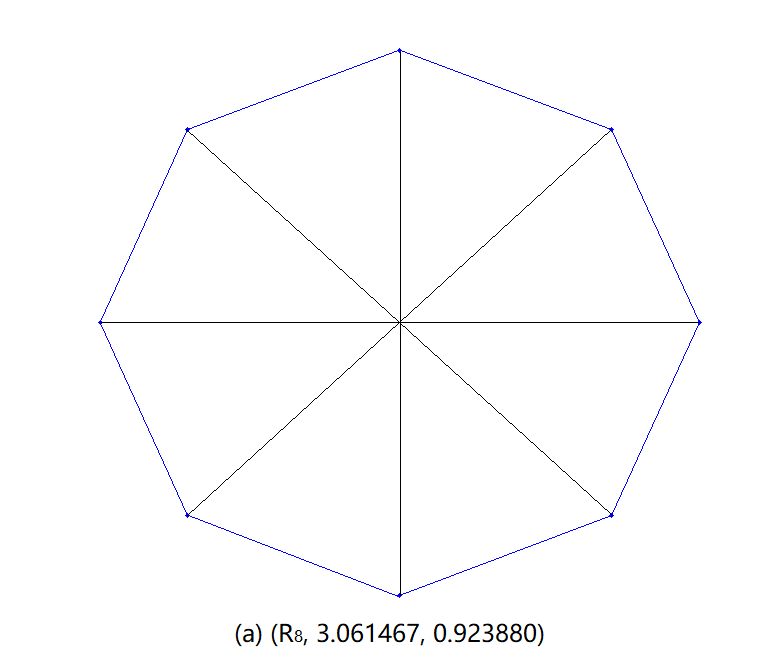}
\end{minipage}%
\hfill
\begin{minipage}[htbp]{0.33\linewidth}
\centering
\includegraphics[height=4cm,width=5cm]{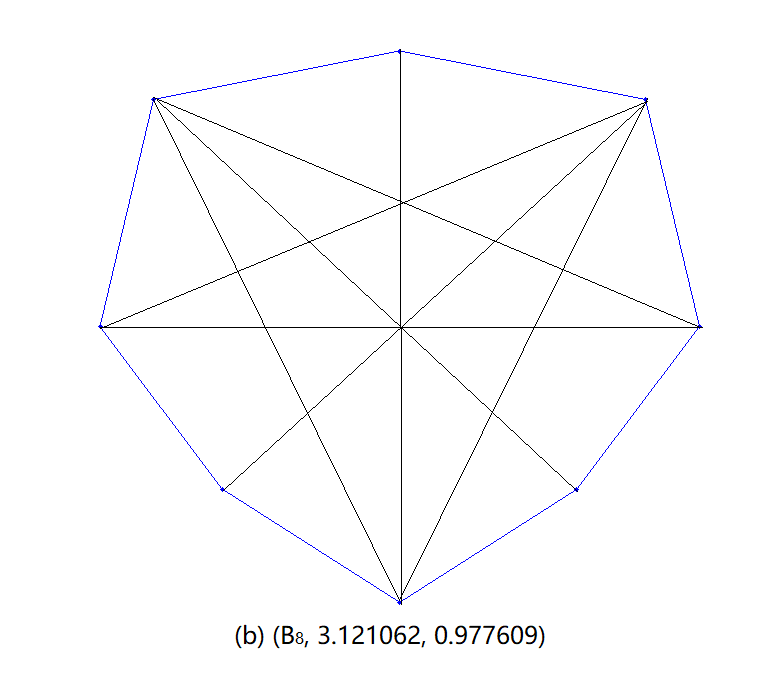}
\end{minipage}
\hfill
\begin{minipage}[htbp]{0.33\linewidth}
\centering
\includegraphics[height=4cm,width=5cm]{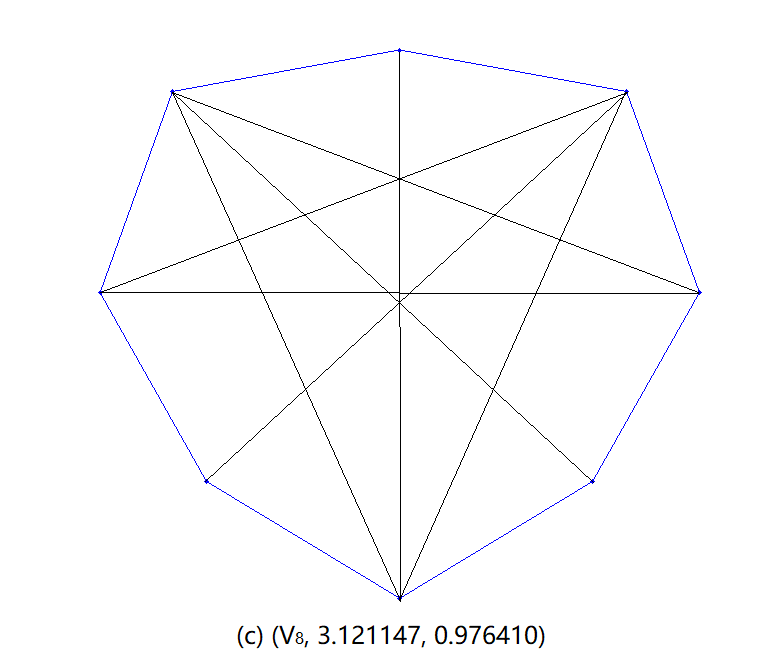}
\end{minipage}
\caption{Three convex small $8$-gon $(P_8, L(P_8), W(P_8)):$ (a) Regular $8$-gon; (b) $8$-gon of maximal width; (c) $8$-gon of maximal perimeter.}
\end{figure}
\begin{theo*}[Reinhardt~\cite{Reinhardt}, Datta~\cite{Datta}]
For all $n\geq 3$, let $L_n^{*}$ denote the maximal perimeter among all convex small $n$-gon and $\overline{L}_n= 2n\sin\frac{\pi}{2n}$.

•When $n$ has an odd factor $m$, $L_n^{*}=\overline{L}_n$ is achieved by finitely many equilateral $n$-gon \cite{Hare1,Hare2,Mossinghoff3}, including $R_{m,n}$. The optimal $n$-gon $R_{m,n}$ is unique if $m$ is prime and $\frac{n}{m}\leq 2.$

•When $n= 2^s$ with $s\geq 2$, $L(R_n)< L_n^{*}<\overline{L}_n$.
 \end{theo*}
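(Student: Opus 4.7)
The plan is to prove the three pieces of the theorem in turn, using classical techniques.

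For the universal upper bound $L_n^*\le\overline{L}_n:=2n\sin\frac{\pi}{2n}$, I would use Reinhardt's chord-averaging argument. Labeling the vertices of a convex small $n$-gon $P$ by $v_0,\dots,v_{n-1}$ in cyclic order, for each non-incident pair $(i,j)$ (meaning $i\ne j$ and $i\ne j-1$), the diameter constraint $|v_iv_j|,|v_{i+1}v_j|\le 1$ together with the law of cosines applied to $\triangle v_iv_jv_{i+1}$ gives
$$|v_iv_{i+1}|\le 2\sin\tfrac{\theta_{i,j}}{2},\qquad\theta_{i,j}:=\angle v_iv_jv_{i+1}.$$
For each fixed $j$ the rays $v_jv_{j+1},\dots,v_jv_{j-1}$ partition the interior angle of $P$ at $v_j$, so $\sum_{i\ne j,j-1}\theta_{i,j}$ equals that interior angle; summing over $j$ yields $\sum_{i,j}\theta_{i,j}=(n-2)\pi$ over all $n(n-2)$ non-incident pairs, with mean $\pi/n$. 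Applying Jensen's inequality to the concave function $\sin$ on $[0,\pi/2]$ then gives
$$(n-2)\,L(P)=\sum_{i,j}|v_iv_{i+1}|\le 2\sum_{i,j}\sin\tfrac{\theta_{i,j}}{2}\le 2n(n-2)\sin\frac{\pi}{2n},$$
so $L(P)\le\overline{L}_n$.

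For the first bullet---``$n$ has an odd factor $m$''---I would verify by direct computation that $R_{m,n}$ attains the bound. Since $m$ is odd, the regular $m$-gon $R_m$ of diameter one extends canonically to a Reuleaux $m$-gon bounded by $m$ unit-radius arcs, each spanning angle $\pi/m$ and centered at the opposite vertex. Subdividing each arc evenly into $n/m$ sub-arcs of angular measure $\pi/n$ and taking the convex hull of the $n$ subdivision endpoints produces an equilateral $n$-gon whose sides are all chords of length $2\sin\frac{\pi}{2n}$, yielding $L(R_{m,n})=\overline{L}_n$. The finiteness of the full extremizer set and the uniqueness of $R_{m,n}$ when $m$ is prime and $n/m\le 2$ are combinatorial classifications of equilateral small $n$-gons attaining $\overline{L}_n$, which I would import from~\cite{Hare1,Hare2,Mossinghoff3} rather than reprove.

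For the second bullet---``$n=2^s$ with $s\ge 2$''---both strict inequalities $L(R_n)<L_n^*<\overline{L}_n$ need independent arguments. For the right inequality, I would trace the equality conditions of the chord bound: equality forces every triangle $\triangle v_iv_jv_{i+1}$ appearing in the averaging to be isoceles with legs of exactly unit length, so the diameter graph of the extremal polygon must contain a spanning odd cycle; but for $n=2^s$ a bipartiteness argument on the diameter graph rules this out, yielding strict inequality. For the left inequality $L(R_n)<L_n^*$, I would exhibit any specific non-regular convex small $n$-gon with perimeter exceeding $L(R_n)=n\sin\frac{\pi}{n}$; Bingane's constructions~\cite{Bingane1,Bingane2,Bingane4} (and the polygon $D_n$ of Theorem~\ref{theo:main}) suffice, since their perimeter deficits from $\overline{L}_n$ are $O(1/n^6)$ whereas $\overline{L}_n-L(R_n)=\Theta(1/n^2)$. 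The main obstacle is the right-hand inequality: correctly unpacking the equality case of Reinhardt's bound and extracting the purely combinatorial bipartite obstruction for $n=2^s$ requires careful bookkeeping of which diagonals are forced to have unit length and how these propagate through the polygon's diameter graph.
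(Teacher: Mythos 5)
The paper itself offers no proof of this statement: it is imported as a classical theorem of Reinhardt and Datta, with the finiteness and uniqueness assertions cited to Hare--Mossinghoff and Mossinghoff. So the only question is whether your sketch stands on its own, and its central step does not. The per-pair chord bound is false: $|v_iv_j|\le 1$ and $|v_{i+1}v_j|\le 1$ do \emph{not} imply $|v_iv_{i+1}|\le 2\sin(\theta_{i,j}/2)$. Writing $a=|v_jv_i|$, $b=|v_jv_{i+1}|$, the law of cosines gives $|v_iv_{i+1}|^2=a^2+b^2-2ab\cos\theta_{i,j}$, and this is $\le 4\sin^2(\theta_{i,j}/2)=2-2\cos\theta_{i,j}$ precisely when $2\cos\theta_{i,j}\,(1-ab)\le (1-a^2)+(1-b^2)$, which fails for thin triangles with one short leg. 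A concrete convex small $4$-gon: vertices $(0,0)$, $(0.9,-0.02)$, $(1,0)$, $(0.5,0.3)$ has diameter $1$, its side from $(0,0)$ to $(0.9,-0.02)$ has length about $0.90$, yet the angle it subtends at the non-incident vertex $(1,0)$ is about $11^\circ$, so $2\sin(\theta/2)\approx 0.20$. Since your averaging applies the inequality to \emph{every} non-incident pair before summing, the inequality $(n-2)L(P)\le 2\sum_{i,j}\sin(\theta_{i,j}/2)$ is unjustified and the upper bound $L_n^*\le\overline{L}_n$ is not established. The bound is of course true, but it needs a different argument, e.g.\ Reinhardt's or Datta's original proofs, or the difference-body argument: $Q=P-P$ is a centrally symmetric convex polygon with at most $2n$ vertices, $\mathrm{diam}\,P\le 1$ means $Q$ lies in the closed unit disk, $L(Q)=2L(P)$, and among convex polygons with at most $2n$ vertices in the unit disk the inscribed regular $2n$-gon maximizes perimeter, giving $L(P)\le 2n\sin\frac{\pi}{2n}$.

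The second bullet also needs repair. Equality in the (corrected) bound does not force a spanning odd cycle in the diameter graph: the extremal polygons $R_{m,n}$ have diameter graphs consisting of an odd cycle on the $m$ Reuleaux corners together with additional edges to the subdivision vertices, not a Hamiltonian cycle; and note that a ``spanning odd cycle'' on $n=2^s$ vertices is impossible by parity alone, so if that really were the forced structure no bipartiteness argument would be needed. What equality actually forces is that the polygon is equilateral and that every side admits a vertex at distance exactly $1$ from both of its endpoints; propagating this yields the inscribed-in-a-Reuleaux-polygon structure, whose number of corners is an odd divisor $m>1$ of $n$ --- and this is exactly what cannot happen for $n=2^s$. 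That is the ``careful bookkeeping'' you defer, and it is the substantive content of the second bullet. Your handling of the first bullet (direct verification for $R_{m,n}$, citing the classification papers for finiteness and uniqueness) and of the strict inequality $L(R_n)<L_n^*$ (exhibiting any explicit better polygon such as $B_n$ or $D_n$) is fine and matches what the paper does, namely quoting these facts from the literature.
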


 When $n=2^s$, the maximal perimeter $L_n^{*}$ is only known for $s\leq3$. Tamvakis~\cite{Tamvakis} proved that $L_4^{*}=2+\sqrt{6}-\sqrt{2}$, and this value is only achieved by $B_4$, represented in Figure 1b. Audet, Hansen and Messine~\cite{Audet1} proved that $L_8^{*}= 3.1211471340\dots$, which is only achieved by $V_8$, represented in Figure 3c.

For the width case:

\begin{theo*}[Bezdek and Fodor~\cite{Bezdek}]
For all $n\geq 3$, let $W_n^{*}$ denote the maximal width among all convex small $n$-gon and let $\overline{W}_n= \cos\frac{\pi}{2n}$.

•When $n$ has an odd factor, $W_n^{*}=\overline{W}_n$ is achieved by a convex small $n$-gon with maximal perimeter $L_n^{*}=\overline{L}_n$.

•When $n=2^s$ with integer $s\geq 2$, $W(R_n)< W_n^{*}<\overline{W}_n$.
\end{theo*}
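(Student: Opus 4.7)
For the lower bound $W_n^* \geq \overline{W}_n$, I would use the Reuleaux-based construction $R_{m,n}$ described in Subsection~2.1: starting from the Reuleaux $m$-gon of diameter $1$ (where $m$ is an odd divisor of $n$ with $m \geq 3$) and inserting $n/m-1$ vertices at equal angular spacing on each arc, the convex hull is a convex small $n$-gon. A direct trigonometric computation of the height associated with each side gives $W(R_{m,n}) = \cos(\pi/(2n))$, and the same polygon realizes $L(R_{m,n}) = \overline{L}_n$, yielding the ``achieved by a convex small $n$-gon with maximal perimeter'' clause. For the reverse bound $W(P) \leq \cos(\pi/(2n))$ on any convex small $n$-gon $P$, I would analyze the width function $w(\theta) = h_P(\theta) + h_P(\theta+\pi)$: on each interval of $\theta$ between consecutive edge-normal angles of $P$, $w$ is a single sinusoid, and the diameter-$1$ hypothesis forces $w(\theta) \leq 1$ everywhere. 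A pigeonhole argument over $n$ equispaced directions, combined with the sinusoidal rigidity on each piece, then forces $\min_\theta w(\theta) \leq \cos(\pi/(2n))$.

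\textbf{Case $n=2^s$.} The strict lower bound $W(R_n) < W_n^*$ follows by invoking Theorem~\ref{theo:main}: since $\delta_n \in (0,\pi/n)$,
\[
W(D_n) = \cos\!\left(\frac{\pi}{2n}+\frac{\delta_n}{2}\right) > \cos\!\left(\frac{\pi}{n}\right) = W(R_n),
\]
so $D_n$ beats the regular polygon. For the strict upper bound $W_n^* < \overline{W}_n$, I would revisit the equality case of the upper bound proved above. Equality $W(P) = \cos(\pi/(2n))$ forces $w(\theta)$ to attain its upper bound $1$ at a prescribed sequence of angles and to be flat-with-minimum in between; this rigidity in turn forces the diameter graph of $P$ to inscribe $P$ in a Reuleaux $m$-gon for some odd $m \geq 3$ dividing $n$ (equivalently, $P$ must be a polygon of the form $R_{m,n}$ up to isometry). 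When $n=2^s$ no such $m$ exists, so equality is impossible and the inequality $W_n^* < \overline{W}_n$ is strict.

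\textbf{Main obstacle.} The sharpness of the upper bound $W(P) \leq \cos(\pi/(2n))$, together with the accompanying equality classification, is the technical heart of the proof; it drives both the odd-factor equality and the $n=2^s$ strict upper inequality. The other pieces are explicit constructions or immediate consequences of Theorem~\ref{theo:main}, but the width-function analysis must fully exploit the interaction between the piecewise-sinusoidal structure of $w(\theta)$, the diameter constraint $\max_\theta w(\theta) \leq 1$, and the vertex count $n$ of $P$, and then be pushed one step further to extract rigidity at equality.
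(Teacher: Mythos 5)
This statement is quoted background: the paper cites it to Bezdek and Fodor~\cite{Bezdek} and gives no proof of its own, so there is nothing internal to compare your argument against; it has to stand on its own as a reconstruction of the cited result. As such, your outline is reasonable in shape. The $R_{m,n}$ construction together with the width/support-function pigeonhole does give both halves of the first bullet: $w(\theta)=h_P(\theta)+h_P(\theta+\pi)$ has at most $n$ sinusoidal pieces on an interval of length $\pi$, some piece has length at least $\pi/n$, and on it $w(\theta)=d\cos(\theta-\phi)$ with $d\leq 1$, which forces $\min_\theta w(\theta)\leq\cos\frac{\pi}{2n}$. That part is fine.

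There are, however, two genuine gaps. First, for $n=2^s$ you invoke Theorem~\ref{theo:main} to get $W(R_n)<W_n^*$, but that theorem (and the construction $D_n$) is stated only for $s\geq 4$, while the claim covers $s\geq 2$; the cases $n=4,8$ need a separate certificate (e.g.\ Bingane's $B_n$, valid for $s\geq 2$, or the exactly known optimal $4$- and $8$-gons mentioned in Section~2), and as written your argument simply does not reach them. Second, and more seriously, the entire content of the strict inequality $W_n^*<\overline{W}_n$ for $n=2^s$ is the equality classification: that $W(P)=\cos\frac{\pi}{2n}$ forces every piece of $w$ to have length exactly $\pi/n$ with each antipodal pair at distance exactly $1$, and that this rigidity forces a Reinhardt-type diameter structure which exists only when $n$ has an odd factor. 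You assert this implication ("this rigidity in turn forces the diameter graph \dots") without any argument, and you yourself flag it as the main obstacle; since $W_n^*$ is attained by compactness, the strict bound is exactly equivalent to this non-attainment statement, so the assertion is not a technical refinement but the theorem itself. As it stands the proposal proves the upper bound and the odd-factor attainment, but not the second bullet.
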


When $n=2^s$, similar with the perimeter case, the maximal width $W_n^{*}$ is known for $s\leq 3$. Bezdek and Fodor~\cite{Bezdek} proved that $W_4^{*}=12\sqrt{3}$, and this value is achieved by infinitely many convex small $4$-gon, including that of maximal perimeter $B_4$. Audet, Hansen, Messine and Ninin~\cite{Audet1} found that $W_8^{*}=14\sqrt{10+2\sqrt{7}}$, which is also achieved by infinitely many convex small $8$-gon, including $B_8$ represented in Figure 3b.

In 2006, Mossinghoff~\cite{Mossinghoff1} conjectured that, when $n=2^s$ and $s\geq 2,$ the diameter graph of a convex small $n$-gon of maximal perimeter has a cycle of length $\frac{n}{2}+1$ plus $\frac{n}{2}-1$ additional pendant edges, and that is verified for $s=2$ and $s=3$. However, the conjecture is no longer true for $s\geq 4$ as the perimeter of $D_n$ exceeds that of the optimal $n$-gon obtained by Mossinghoff. Thus, it remains to study the case $n=2^s$, for $s\geq 4$.
Bingane~\cite{Bingane1} constructed a family of convex small $n$-gon $B_n$.

For $n=2^s$ with $s\geq 2$,
$$L(B_n)=2n\sin\frac{\pi}{2n}\cos\left(\frac{\pi}{2n}-\frac{1}{2}\arcsin\left(\frac{1}{2}\sin{\frac{2\pi}{n}}\right)\right),$$
$$W(B_n)=\cos\left(\frac{\pi}{n}-\frac{1}{2}\arcsin\left(\frac{1}{2}\sin{\frac{2\pi}{n}}\right)\right),$$
and
$$\overline{L}_n-L(B_n)=\frac{\pi^7}{32n^6}+O\left(\frac{1}{n^8}\right),$$
$$\overline{W}_n-W(B_n)=\frac{\pi^4}{8n^4}+O\left(\frac{1}{n^6}\right).$$
By contrast,
$$\overline{L}_n-L(R_n)=\frac{\pi^3}{8n^2}+O\left(\frac{1}{n^4}\right),$$
$$\overline{W}_n-W(R_n)=\frac{3\pi^2}{8n^2}+O\left(\frac{1}{n^4}\right)$$
for all even $n\geq 4$. Note that $L(B_n)=L_4^{*}=W_4^{*}$, and $W(B_8)=W_8^{*}.$ Recently, Bingane~\cite{Bingane4} constructed a family of convex small $n$-gon for any $n=2^s$ where $s\geq 4$, showing that:

\begin{theo*}[Bingane~\cite{Bingane4}]
Suppose $n=2^s$ with integer $s\geq 4$. Let $\overline{L}_n=2n\sin{\frac{\pi}{2n}}$ denote an upper bound on the perimeter $L(P_n)$ of a convex small $n$-gon $P_n$, and $\overline{W}_n=\cos\frac{\pi}{2n}$ denote an upper bound on its width $W(P_n)$. Then there exists a convex small $n$-gon $C_n$ such that
$$L(C_n)=2n\sin\frac{\pi}{2n}\cos\left(\frac{1}{2}\arctan\left(\tan\frac{2\pi}{n}\tan\frac{\pi}{n}\right)
-\frac{1}{2}\arcsin\left(\frac{\sin\left(\frac{2\pi}{n}\right)\sin\left(\frac{\pi}{n}\right)}
{\sqrt{4\sin^2\left(\frac{\pi}{n}\right)+\cos\left(\frac{4\pi}{n}\right)}}\right)\right),$$

$$W(C_n)=\cos\left(\frac{\pi}{2n}+\frac{1}{2}\arctan\left(\tan\frac{2\pi}{n}\tan\frac{\pi}{n}\right)
-\frac{1}{2}\arcsin\left(\frac{\sin\left(\frac{2\pi}{n}\right)\sin\left(\frac{\pi}{n}\right)}
{\sqrt{4\sin^2\left(\frac{\pi}{n}\right)+\cos\left(\frac{4\pi}{n}\right)}}\right)\right),$$
and
$$\overline{L}_n-L\left(C_n\right)=\frac{\pi^9}{8n^8}+O\left(\frac{1}{n^{10}}\right),$$
$$\overline{W}_n-W\left(C_n\right)=\frac{\pi^5}{4n^5}+O\left(\frac{1}{n^{7}}\right).$$
\end{theo*}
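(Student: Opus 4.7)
The plan is to construct $C_n$ explicitly as a symmetric configuration with one angular degree of freedom beyond that used for $B_n$, pin down the parameters by the diameter-$1$ constraints, read off the perimeter and width in closed form, and then extract the error estimates from a delicate Taylor expansion with fourth-order cancellation.

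\textbf{Construction.} For $n=2^s$ with $s\geq 4$, I build $C_n$ on the cycle-plus-pendant diameter graph used for $B_n$ (a cycle of length $n/2+1$ together with $n/2-1$ pendant edges), but introduce an additional angular parameter that tilts the pendant vertices. This gives vertex coordinates parameterised by $\pi/n$ and two angles $\alpha,\beta$. Imposing that every edge of the diameter graph realises distance exactly $1$ yields, after the reflection symmetry is used, exactly two independent scalar equations; solving them explicitly gives
\[ 2\alpha=\arctan\!\left(\tan\frac{2\pi}{n}\tan\frac{\pi}{n}\right),\qquad 2\beta=\arcsin\!\left(\frac{\sin\frac{2\pi}{n}\sin\frac{\pi}{n}}{\sqrt{4\sin^2\frac{\pi}{n}+\cos\frac{4\pi}{n}}}\right). \]
The remaining constraints (all non-edge vertex pairs at distance $\leq 1$ and convexity of the polygon) reduce, via $\alpha,\beta=O(1/n^2)$, to one or two elementary trigonometric estimates that are easy to verify for $n\geq 16$.

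\textbf{Perimeter and width.} From the explicit vertex coordinates I compute the $n$ side lengths and use the reflection symmetry together with product-to-sum formulas to collapse their sum into $L(C_n)=2n\sin\frac{\pi}{2n}\cos(\alpha-\beta)$. The width is realised by one of the two sides adjacent to the axis of symmetry; the perpendicular distance from its opposite vertex then yields $W(C_n)=\cos\!\bigl(\tfrac{\pi}{2n}+\alpha-\beta\bigr)$.

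\textbf{Asymptotics.} Taylor expansion gives $\tan\frac{2\pi}{n}\tan\frac{\pi}{n}=\frac{2\pi^2}{n^2}+\frac{10\pi^4}{3n^4}+O(1/n^6)$, and after expanding the numerator $\sin\frac{2\pi}{n}\sin\frac{\pi}{n}$ and the denominator $\sqrt{4\sin^2(\pi/n)+\cos(4\pi/n)}$ through order $(\pi/n)^4$ and dividing, the argument of $\arcsin$ equals $\frac{2\pi^2}{n^2}+\frac{7\pi^4}{3n^4}+O(1/n^6)$. Since $\arctan(x)=x+O(x^3)$ and $\arcsin(x)=x+O(x^3)$ with $x=O(1/n^2)$, the cubic corrections contribute only $O(1/n^6)$, so
\[ 2(\alpha-\beta)=\frac{\pi^4}{n^4}+O(1/n^6). \]
Substituting into $\overline{L}_n-L(C_n)=2n\sin\frac{\pi}{2n}\bigl(1-\cos(\alpha-\beta)\bigr)$ and using $2n\sin\frac{\pi}{2n}=\pi+O(1/n^2)$ produces $\frac{\pi^9}{8n^8}+O(1/n^{10})$, while $\overline{W}_n-W(C_n)=2\sin\!\bigl(\frac{\pi}{2n}+\frac{\alpha-\beta}{2}\bigr)\sin\frac{\alpha-\beta}{2}\sim\frac{\pi}{2n}\cdot\frac{\pi^4}{2n^4}=\frac{\pi^5}{4n^5}$.

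\textbf{Main obstacle.} The central difficulty is the fourth-order cancellation hidden in $\alpha-\beta$: both $2\alpha$ and $2\beta$ agree to leading order $\frac{2\pi^2}{n^2}$, so the stated $1/n^8$ and $1/n^5$ error rates only surface once the $O((\pi/n)^4)$ coefficients $\frac{10\pi^4}{3n^4}$ and $\frac{7\pi^4}{3n^4}$ are computed carefully. Pinning the difference down to the exact constant $\frac{\pi^4}{n^4}$ (rather than merely $O(1/n^4)$) is what produces the precise coefficients $\frac{\pi^9}{8n^8}$ and $\frac{\pi^5}{4n^5}$; by contrast, the construction and the verification of the diameter-$1$ constraints are essentially direct calculation.
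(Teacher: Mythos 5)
The asymptotic half of your argument is sound: the expansions $\tan\frac{2\pi}{n}\tan\frac{\pi}{n}=\frac{2\pi^2}{n^2}+\frac{10\pi^4}{3n^4}+O(n^{-6})$ and $\frac{\sin\frac{2\pi}{n}\sin\frac{\pi}{n}}{\sqrt{4\sin^2\frac{\pi}{n}+\cos\frac{4\pi}{n}}}=\frac{2\pi^2}{n^2}+\frac{7\pi^4}{3n^4}+O(n^{-6})$ are correct, the fourth-order cancellation does give $\frac{1}{2}\arctan(\cdot)-\frac{1}{2}\arcsin(\cdot)=\frac{\pi^4}{2n^4}+O(n^{-6})$, and the coefficients $\frac{\pi^9}{8n^8}$ and $\frac{\pi^5}{4n^5}$ follow. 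But that part only processes formulas you never establish. The substantive content of the theorem is the existence of $C_n$ together with the closed forms for $L(C_n)$ and $W(C_n)$, and there your write-up has a genuine gap: the construction is described in one sentence (``introduce an additional angular parameter that tilts the pendant vertices''), the ``two independent scalar equations'' are never written down, and the claim that solving them ``explicitly gives'' $2\alpha=\arctan\left(\tan\frac{2\pi}{n}\tan\frac{\pi}{n}\right)$ and $2\beta=\arcsin(\cdots)$ is pure assertion --- yet this is exactly the step that constitutes the proof. The smallness of the polygon (all non-adjacent vertex pairs at distance $\le 1$), its convexity, the collapse of the side-length sum to $2n\sin\frac{\pi}{2n}\cos(\alpha-\beta)$, and the identification of the width-realizing side are likewise asserted rather than verified.

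Moreover, your structural choice disagrees with the construction the theorem actually refers to. As recalled at the end of Section 2.1 of the paper, Bingane's $C_n$ is built on a diameter graph consisting of a cycle of length $\frac{3n}{4}-1$ plus $\frac{n}{4}+1$ pendant edges, with turning angles $\alpha_k=\frac{\pi}{n}+(-1)^k\delta$ governed by a \emph{single} perturbation $\delta$; the $\arctan$ and $\arcsin$ terms are not two separate geometric tilt angles but the two pieces of the explicit solution of one closure equation of the type $a\cos\delta+b\sin\delta=c$ (exactly the same mechanism as in the present paper, where $\delta_n$ solves $\sigma_n\left(\cos x-\cos\frac{\pi}{n}\right)=\sin x$ and comes out as a difference of two inverse trigonometric functions). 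The graph you propose --- a cycle of length $\frac{n}{2}+1$ with $\frac{n}{2}-1$ pendant edges --- is the Mossinghoff/$B_n$ structure, which in the known constructions is tied to a $\Theta(n^{-6})$ perimeter gap; the claim that a tilted variant of it attains a $\Theta(n^{-8})$ gap with precisely these closed-form angles would need an actual derivation, and none is given. As it stands, the existence statement, which is the heart of the theorem, remains unproven in your proposal.
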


Now we briefly introduce how Bingane~\cite{Bingane4} constructed his small polygons:

For any $n=2^s$ where $s\geq 4$ is an integer, consider a convex small $n$-gon $C_n$ having the following diameter graph: a $\frac{3n}{4}-1$-length cycle $v_0-v_1-\dots-v_k-\dots-v_{\frac{3n}{8}-1}-v_{\frac{3n}{8}}-\dots-v_{\frac{3n}{4}-k-1}-\dots-v_{\frac{3n}{4}-2}-v_0$
plus $\frac{n}{4}+1$ pendant edges $v_0-v_{\frac{3n}{4}}-1$,$v_{3j-2}-v_{\frac{3n}{4}+j+1},j=1,2,\dots, \frac{n}{4}$. Assume that $P_n$ has the edge $v_0-v_{\frac{3n}{4}-1}$ as axis of symmetry, and for all $j=1,2,\dots,\frac{n}{4},$ the pendant edge $v_{3j-2}-v_{\frac{3n}{4}+j-1}$ bissects the angle form at the vertex
$v_{3j-2}$ by the edge $v_{3j-2}-v_{3j-1}$ and the edge $v_{3j-2}-v_{3j-3}.$

For $k=0,1,\dots,\frac{3n}{8}-1$, let $c_k=2$ if $k=3k-2$, and $c_k=1$ otherwise. Then let $c_0\alpha_0$ denote the angle form at the vertex $v_0$ by the edge $v_0-v_1$ and the edge $v_0-v_{\frac{3n}{4}-1}$, and for all $k=1,2,\dots,\frac{3n}{8}-1$, $c_k\alpha_k$ the angle formed at the vertex $v_k$ by the edge $v_k-v_{k+1}$ and the edge $v_k-v_{k-1}$. Suppose $\alpha_k=\frac{\pi}{n}+(-1)^k\delta$ with $\mid \delta\mid<\frac{\pi}{n}$ for all $k=0,1,\dots,\frac{3n}{8}-1.$ Then, by taking the convex hull of all vertices, they constructed a convex small polygon $C_n$.
\subsection{Discipline to find small polygons }\label{section}

Let $n=2^s$ where $s\geq4$ is an integer. Consider the following convex small $n$-gon $P_n$. Let the vertices of $P_n$ be $v_0-v_2-\dots-v_{\frac{n}{2}-2}-v_{\frac{n}{2}}-v_{\frac{n}{2}+2}-\dots-v_{n-2}-v_1-v_3-\dots-v_{\frac{n}{2}+1}-\dots-v_{n-1}-v_0,$ in the clockwise order, and let $v_{n}=v_0,v_{n+1}=v_1$.

Let $v_{2k},v_{2k+1},v_{2k+2},v_{2k+3}$ be four vertices of $P_n$, and $v_{2k}-v_{2k+1}$ and $v_{2k+2}-v_{2k+3}$ are two edges of $P_n$. Consider the following diameter pattern:

Pattern I: $v_{2k}v_{2k+1},v_{2k+1}v_{2k+2},v_{2k+2}v_{2k+3}$ are three adjacent diameters, and the diameter changes in the way of $v_{2k}v_{2k+1}\rightarrow v_{2k+1}v_{2k+2}\rightarrow v_{2k+2}v_{2k+3}.$ (See Figure 4a)

Pattern II: $v_{2k}v_{2k+1},v_{2k}v_{2k+3},v_{2k+3}v_{2k+2}$ are three adjacent diameters, and the diameter changes in the way of $v_{2k}v_{2k+1}\rightarrow v_{2k}v_{2k+3}\rightarrow v_{2k+3}v_{2k+2}.$ (See Figure 4b)

\begin{figure}[H]
\begin{minipage}[htbp]{0.6\linewidth}
\centering
\includegraphics[height=6cm,width=6cm]{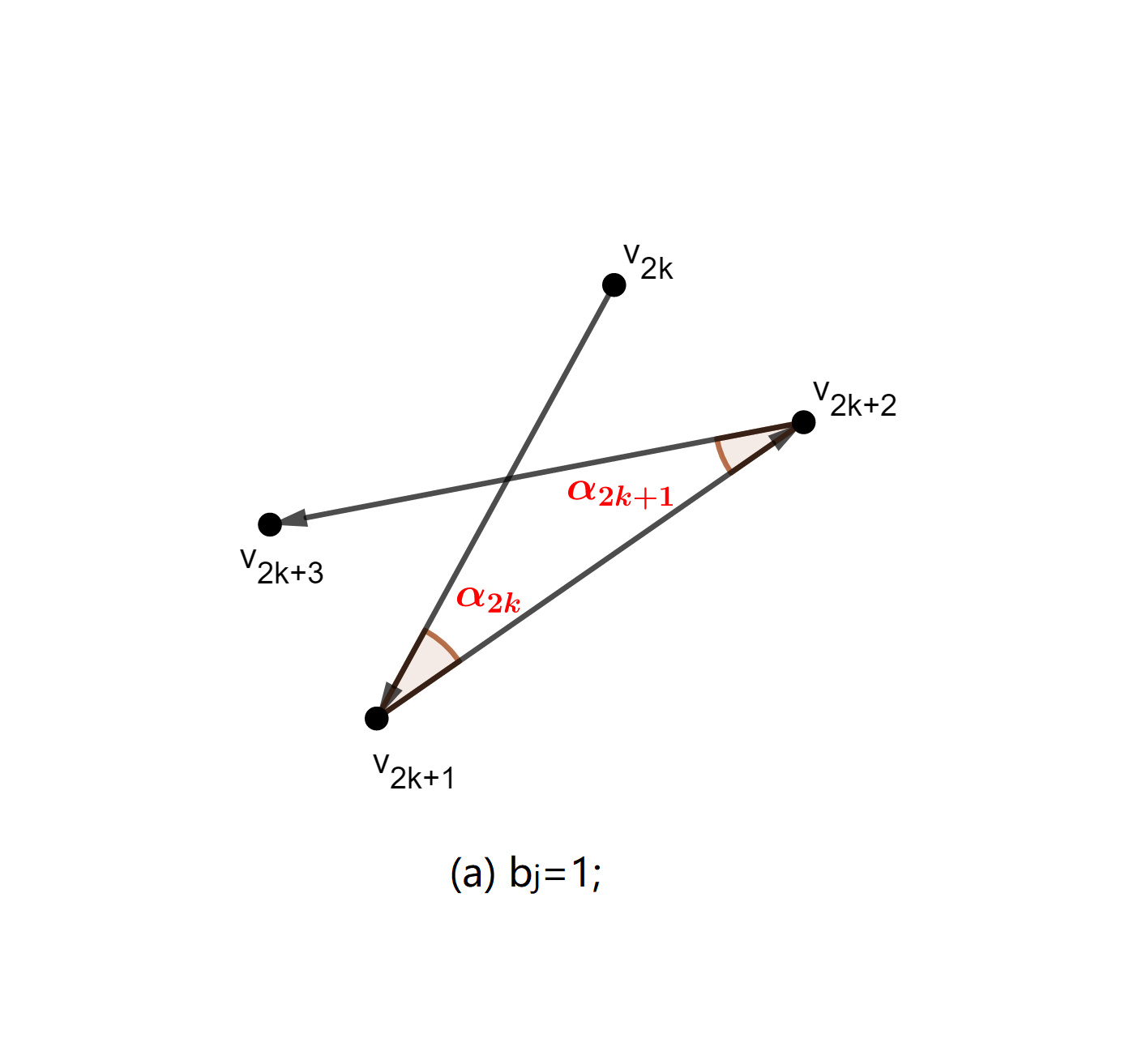}
\end{minipage}%
\hfill
\begin{minipage}[htbp]{0.6\linewidth}
\centering
\includegraphics[height=6cm,width=6cm]{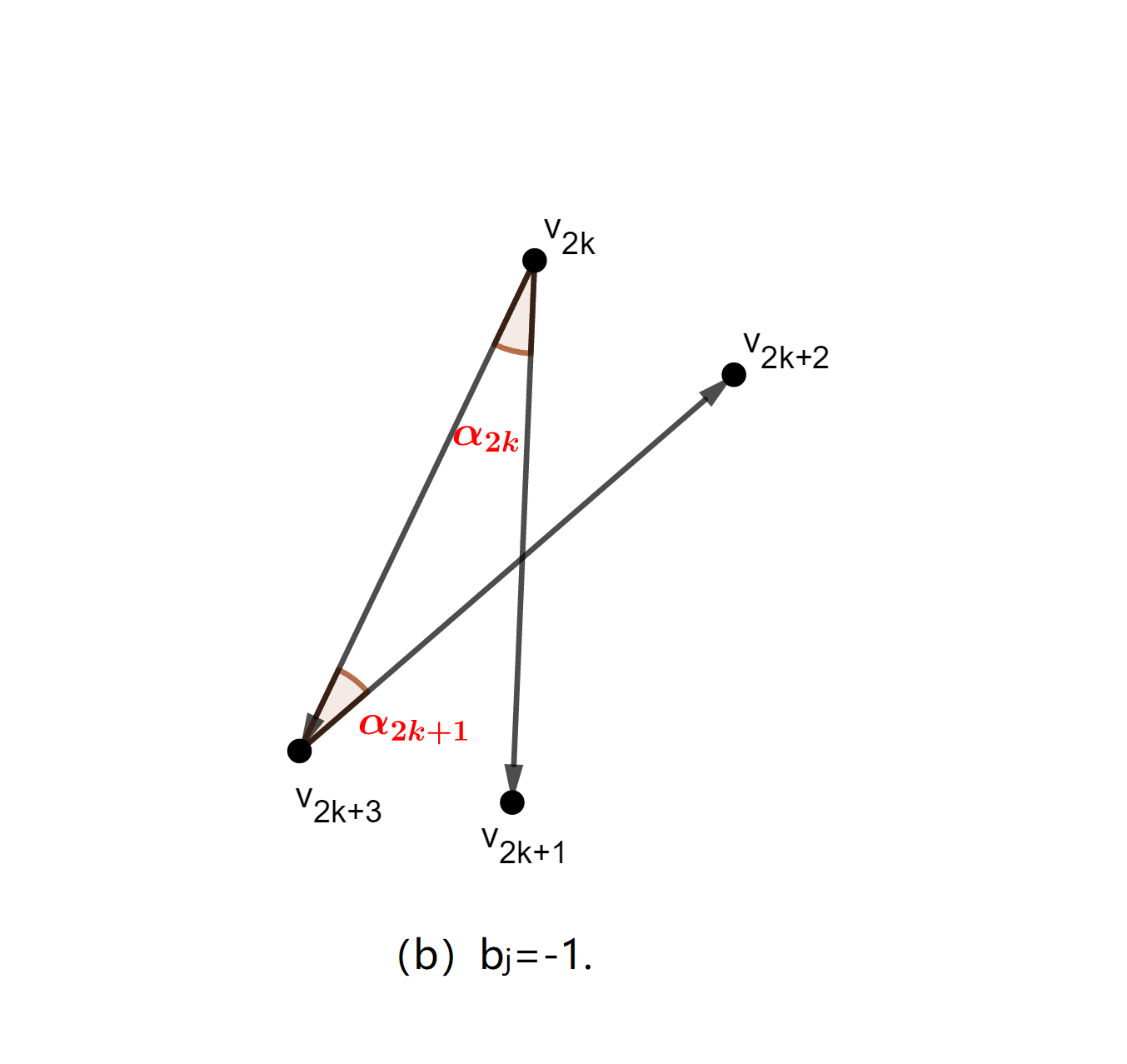}
\end{minipage}
\caption{three adjacent diameters formed by four vertices $v_{2k},v_{2k+1},v_{2k+2},v_{2k+3}$.}
\end{figure}

For all $j=0,1,\dots,\frac{n}{2}-1,$ let

$$b_j=
 \begin{cases}
 +1,&\text{if $v_{2k}v_{2k+1}\rightarrow v_{2k+1}v_{2k+2}\rightarrow v_{2k+2}v_{2k+3}$},\\
 -1,&\text{if $v_{2k}v_{2k+1}\rightarrow v_{2k}v_{2k+3}\rightarrow v_{2k+3}v_{2k+2}$.}
 \end{cases}$$

\begin{figure}[htbp]
\centering
\includegraphics[height=7cm,width=7cm]{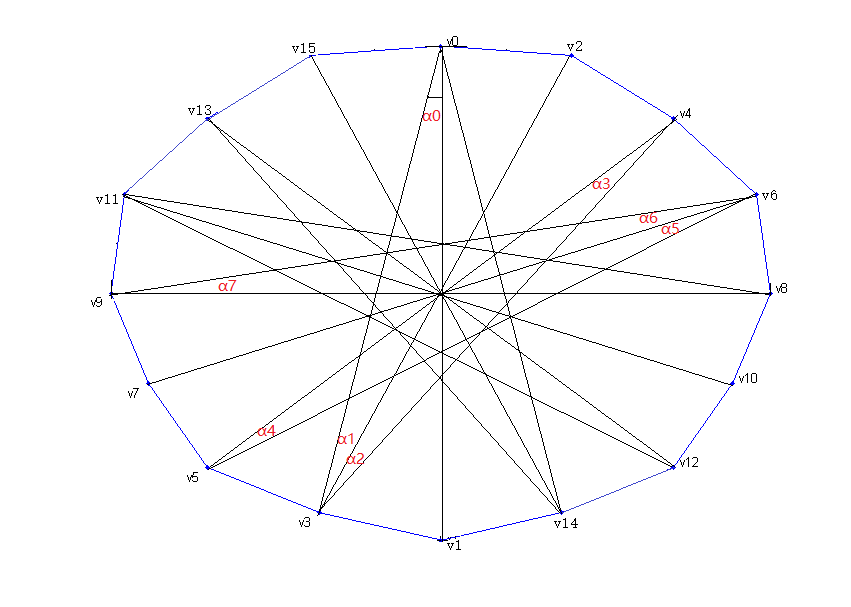}
\caption{Definition of variables: Example of $n=16$ vertices with $(b_1,b_2,b_3,b_4)=(-1,1,1,-1)$}
\end{figure}

Moreover, we assume that $P_n$ is symmetric with respect to the edge $v_0-v_1$, and let $\alpha_{0}$ denote the first rotation angle formed by the vertices $v_0,v_1,v_2,v_3$. Since the family of convex equilateral small $n$-gon constructed as the combination of $I$ and $II$, $\alpha_0$ can be formed by the edge $v_0-v_1$ and the edge $v_1-v_2$, or by the edge $v_0-v_1$ and the edge $v_0-v_3$. For all $k=0,1,\dots,\frac{n}{2}-1$, let $\alpha_{2k}$ denote the first rotation angle formed by the four vertices $v_{2k},v_{2k+1},v_{2k+2},v_{2k+3}$, and let $\alpha_{2k+1}$ denote the second rotation angle formed by the four vertices $v_{2k},v_{2k+1},v_{2k+2},v_{2k+3}.$ Now, we let $\alpha_{2k}=\frac{\pi}{n}+c_k\delta$, $\alpha_{2k+1}=\frac{\pi}{n}-c_k\delta$ with $0<\delta<\frac{\pi}{n}$ and $c_k\in\{1,-1\}$ for all $k=0,1,\dots,\frac{n}{2}-1$.

Due to the symmetry with respect to the edge $v_0-v_1$, we have:

\begin{prop}\label{prop:sum}
$$\sum_{i=0}^{\frac{n}{2}-1}\alpha_i=\frac{\pi}{2},\eqno (1)$$
and
$$L(P_n)=\sum_{i=0}^{\frac{n}{2}-1}4\sin\frac{\alpha_i}{2},\eqno (2)$$
$$W(P_n)=\min_{j=0,1,\dots, \frac{n}{2}-1}\cos\frac{\alpha_j}{2}.\eqno (3)$$

\end{prop}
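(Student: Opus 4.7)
The plan is to exploit the symmetry of $P_n$ about the segment $v_0v_1$ together with the fact that every diameter has unit length, so that any two adjacent diameters form an isoceles triangle whose geometry is entirely determined by the rotation angle between them.

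For the angle sum (1), I would track the direction of each unit diameter as an element of $\mathbb{R}/\pi\mathbb{Z}$ and follow the diameters in the cyclic order dictated by the patterns I and II. Moving from one diameter to the next rotates the direction by exactly the corresponding $\alpha_j$. Traversing the whole diameter graph returns us to the starting direction after sweeping through all supporting directions once, so the total rotation is exactly $\pi$. The axis of symmetry $v_0v_1$ cuts this rotation into two equal halves, and the half corresponding to $k=0,1,\dots,\frac{n}{2}-1$ is precisely $\sum_{i=0}^{n/2-1}\alpha_i=\frac{\pi}{2}$.

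For the perimeter (2), I would compute side lengths locally from each 4-vertex block $v_{2k},v_{2k+1},v_{2k+2},v_{2k+3}$. In Pattern I the three unit diameters meet at $v_{2k+1}$ and $v_{2k+2}$ with apex angles $\alpha_{2k}$ and $\alpha_{2k+1}$, so by the isoceles-triangle identity the two polygon sides $v_{2k}v_{2k+2}$ and $v_{2k+1}v_{2k+3}$ have lengths $2\sin(\alpha_{2k}/2)$ and $2\sin(\alpha_{2k+1}/2)$. In Pattern II the same identity applies with the roles of the two apex angles swapped. Summing $2\sin(\alpha_i/2)$ over the $n/2$ rotation angles on one half of $P_n$ and doubling by symmetry gives the formula $L(P_n)=\sum_{i=0}^{n/2-1}4\sin(\alpha_i/2)$.

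For the width (3), I would observe that each side arises as the base of an isoceles triangle whose apex $v_c$ is joined to both endpoints by unit diameters. An elementary calculation gives the apex height $\cos(\alpha_i/2)$. I then need to check that $v_c$ is indeed the farthest vertex of $P_n$ from the line carrying that side: this follows because any other vertex lies within unit distance of both endpoints of the side, and a short convexity argument shows its distance to the base cannot exceed $\cos(\alpha_i/2)$. Taking the minimum over sides (equivalently, over the $n/2$ rotation angles, by symmetry) yields $W(P_n)=\min_j\cos(\alpha_j/2)$.

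The main obstacle is the bookkeeping across the overlapping 4-vertex patterns: one must verify that Patterns I and II can be treated uniformly and that each rotation angle is consistently matched to the correct polygon side in each case. A secondary technical point is the rigorous identification of the farthest vertex from each side as the apex of the corresponding isoceles triangle, since in principle some vertex far around the polygon could conceivably be further from the supporting line; ruling this out uses the diameter constraint and the convexity of $P_n$.
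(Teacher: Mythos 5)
Your proposal is correct, and for parts (2) and (3) it is essentially the argument the paper leaves implicit: the paper's entire proof is one sentence ("(1)(2)(3) come from the fact that $P_n$ is symmetric, and $\alpha_{2k}=\frac{\pi}{n}+c_k\delta$, $\alpha_{2k+1}=\frac{\pi}{n}-c_k\delta$"), so your isoceles-triangle computation of the side lengths $2\sin(\alpha_i/2)$ across Patterns I and II, the doubling by symmetry, and the lens-type verification that the apex really is the farthest vertex from each side (so that the height equals $\cos(\alpha_i/2)$) simply supply the details the authors omit — the last point being one the paper never addresses at all, and your diameter-constraint argument handles it cleanly without even needing convexity. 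The one place where your route genuinely differs is (1): the paper obtains $\sum_{i=0}^{n/2-1}\alpha_i=\frac{\pi}{2}$ purely algebraically from the chosen parametrization, since $\alpha_{2k}+\alpha_{2k+1}=\frac{2\pi}{n}$ for each of the $\frac{n}{4}$ pairs, whereas you derive it from a global rotation argument (the diameter direction in $\mathbb{R}/\pi\mathbb{Z}$ returns to itself, each $\alpha_i\in(0,\frac{2\pi}{n})$ forces the total turning to be exactly $\pi$, and the symmetry axis splits it evenly). Your version is slightly heavier but has the advantage of explaining why any symmetric construction of this type must satisfy (1), independently of the specific choice $\alpha_{2k}=\frac{\pi}{n}\pm\delta$; the paper's version is immediate but tied to that parametrization. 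Both are valid, so there is no gap — just make sure, when matching angles to sides, to note that in Pattern II the apexes are $v_{2k}$ and $v_{2k+3}$ rather than $v_{2k+1}$ and $v_{2k+2}$, which is exactly the swap you flagged.
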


\begin{proof}
(1)(2)(3) come from the fact that $P_n$ is symmetric, and $\alpha_{2k}=\frac{\pi}{n}+c_k\delta$, $\alpha_{2k+1}=\frac{\pi}{n}-c_k\delta$ with $0<\delta<\frac{\pi}{n}$.
\end{proof}
We use cartesian coordinates to describe the $n-$gon $P_n$, assuming that a vertex $v_i$, $i=0,1,\dots,n-1$, is positioned at abscissa $x_i$ and ordinate $y_i$. Placing the vertex $v_1$ at the origin, we set $x_1=y_1=0.$ We also assume that $P_n$ is in the half plane $y\geq 0$.

\begin{prop}\label{prop:change}
If we place the vertex $v_0$ at $(0,1)$ in the plane, then the difference of $x-$coordinates between two vertices $v_{2k},v_{2k+2}$ of $P_n$ is

$$\Delta_x=
 \begin{cases}
 -\sin\left(2k\cdot\frac{\pi}{n}\right)+\sin\left(\left(2k+1\right)\cdot\frac{\pi}{n}+c_k\delta\right),&\text{if $b_k=1$},\\
 -\sin\left(\left(2k+1\right)\cdot\frac{\pi}{n}+c_k\delta\right)+\sin\left((2k+2)\cdot\frac{\pi}{n}\right),&\text{if $b_k=-1$.}
 \end{cases}$$
\end{prop}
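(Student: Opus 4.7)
The approach is to track, for each block index $k$, the direction of the entry diameter $v_{2k}v_{2k+1}$. Let $\gamma_k$ denote the angle that the vector $v_{2k}-v_{2k+1}$ makes with the positive $x$-axis. Since $v_0=(0,1)$ and $v_1=(0,0)$, we have $\gamma_0=\pi/2$. The crucial observation is that, whether the $k$-th block uses Pattern I or Pattern II, its two pivots together rotate the diameter direction clockwise by exactly $\alpha_{2k}+\alpha_{2k+1}=2\pi/n$, because $\alpha_{2k}=\pi/n+c_k\delta$ and $\alpha_{2k+1}=\pi/n-c_k\delta$. Hence $\gamma_{k+1}=\gamma_k-2\pi/n$, so by induction $\gamma_k=\pi/2-2k\pi/n$, independent of the sign sequence $(b_0,\dots,b_{k-1})$.

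When $b_k=+1$ (Pattern I), the pivots are $v_{2k+1}$ and $v_{2k+2}$, and we write
\[
v_{2k+2}-v_{2k}=(v_{2k+1}-v_{2k})+(v_{2k+2}-v_{2k+1}).
\]
The vector $v_{2k+1}-v_{2k}$ has angle $\gamma_k+\pi$, contributing $-\sin(2k\pi/n)$ to the $x$-coordinate. The vector $v_{2k+2}-v_{2k+1}$ is $v_{2k}-v_{2k+1}$ rotated clockwise around $v_{2k+1}$ by $\alpha_{2k}$, hence has angle $\gamma_k-\alpha_{2k}=\pi/2-(2k+1)\pi/n-c_k\delta$, contributing $\sin((2k+1)\pi/n+c_k\delta)$. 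Summing yields the first case.

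When $b_k=-1$ (Pattern II), the pivots are $v_{2k}$ and $v_{2k+3}$, and we split
\[
v_{2k+2}-v_{2k}=(v_{2k+3}-v_{2k})+(v_{2k+2}-v_{2k+3}).
\]
Here $v_{2k+3}-v_{2k}$ is $v_{2k+1}-v_{2k}$ (of angle $\gamma_k+\pi$) rotated clockwise around $v_{2k}$ by $\alpha_{2k}$, giving $x$-component $-\sin((2k+1)\pi/n+c_k\delta)$. The vector $v_{2k+2}-v_{2k+3}$ is by construction the entry diameter of block $k+1$ written in the form $v_{2(k+1)}-v_{2(k+1)+1}$, so it has angle $\gamma_{k+1}=\pi/2-(2k+2)\pi/n$ and $x$-component $\sin((2k+2)\pi/n)$. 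Summing yields the second case.

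The main technical obstacle is purely bookkeeping: one must fix a consistent rotation convention (each pivot rotates clockwise, in accordance with the clockwise vertex ordering on $P_n$) and carefully keep track of the orientation of each vector $v_i-v_j$ versus $v_j-v_i$. Verifying the base case $\gamma_0=\pi/2$ at $k=0$, so that $v_2$ lies in $\{x>0\}$ under Pattern I and $v_3$ lies in $\{x<0\}$ under Pattern II, pins down the sign conventions, after which the calculations above reduce to routine trigonometry.
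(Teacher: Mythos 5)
Your proof is correct and follows essentially the same route as the paper: the paper also expresses $\Delta_x$ as a difference of $x$-projections of consecutive unit diameters, namely $-\sin\bigl(\sum_{i=0}^{2k-1}\alpha_i\bigr)+\sin\bigl(\sum_{i=0}^{2k}\alpha_i\bigr)$ in Pattern I and $-\sin\bigl(\sum_{i=0}^{2k}\alpha_i\bigr)+\sin\bigl(\sum_{i=0}^{2k+1}\alpha_i\bigr)$ in Pattern II, and then uses $\alpha_{2k}+\alpha_{2k+1}=\frac{2\pi}{n}$, which is exactly your invariant $\gamma_k=\frac{\pi}{2}-\frac{2k\pi}{n}$. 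Your write-up just makes the direction-tracking and sign conventions explicit where the paper leaves them implicit.
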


\begin{proof}
Based on the construction of $P_n$, the side $v_{2k}-v_{2k+2}$ is formed by two ways. If the diameter changes in the way of $v_{2k}v_{2k+1}\rightarrow v_{2k+1}v_{2k+2}\rightarrow v_{2k+2}v_{2k+3}.$ Then
$$\Delta_x=-\sin(\sum_{i=0}^{2k-1}\alpha_i)+\sin(\sum_{i=0}^{2k}\alpha_i),$$
If the diameter changes in the way of $v_{2k}v_{2k+1}\rightarrow v_{2k}v_{2k+3}\rightarrow v_{2k+3}v_{2k+2}$, then
$$\Delta_x=-\sin(\sum_{i=0}^{2k}\alpha_i)+\sin(\sum_{i=0}^{2k+1}\alpha_i).$$
Also since $\alpha_{2k}=\frac{\pi}{n}+c_k\delta$, $\alpha_{2k+1}=\frac{\pi}{n}-c_k\delta$ with $0<\delta<\frac{\pi}{n}$, therefore we complete the proof of this proposition.
\end{proof}

Since we place the vertex $v_0$ at $(0,1)$ in the plane, then,
 $$x_{\frac{n}{2}}=-x_{\frac{n}{2}+1},\eqno (4a)$$
 $$y_{\frac{n}{2}}=y_{\frac{n}{2}+1}. \eqno (4b)$$
 Since the edge $v_{\frac{n}{2}}-v_{\frac{n}{2}+1}$ is horizontal and $\parallel v_{\frac{n}{2}}-v_{\frac{n}{2}+1}\parallel=1,$ we also have
 $$x_{\frac{n}{2}}=\frac{1}{2}=-x_{\frac{n}{2}+1}.\eqno (5)$$
 Then by Proposition \ref{prop:sum}, the perimeter (2) and width (3) become
 $$L(P_n)=2n\sin\frac{\pi}{2n}\cos\frac{\delta}{2},\eqno (6)$$
 $$W(P_n)=\cos\left(\frac{\pi}{2n}+\frac{\delta }{2}\right).\eqno(7)$$
Therefore, it is worth to consider the optimization problem: $\min\delta$.

\section{Proof of Theorem \ref{theo:main}}

\subsection{Solution to the optimization problem}

By Proposition \ref{prop:change}, the change of $x-$axis of between vertices $v_{2k},v_{2k+2}$ is
$$\Delta_x=-b_k\sin\left(\left(2k+1\right)\cdot\frac{\pi}{n}-b_k\frac{\pi}{n}\right)+b_k\sin\left(\left(2k+1\right)\cdot\frac{\pi}{n}+c_k\delta\right)\eqno (8)$$
By trigonometric function property,

\begin{align*}
\Delta_x=&-b_k\sin\left(\left(2k+1\right)\cdot\frac{\pi}{n}\right)\cos\left(\frac{\pi}{n}\right)+\cos\left(\left(2k+1\right)\cdot\frac{\pi}{n}\right)\sin\left(\frac{\pi}{n}\right)\\
&+b_k\sin\left(\left(2k+1\right)\cdot\frac{\pi}{n}\right)\cos\delta+b_kc_k\cos\left(\left(2k+1\right)\cdot\frac{\pi}{n}\right)\sin\delta.
\end{align*}
Therefore, coordinate $\left(x_{\frac{n}{2}},y_{\frac{n}{2}}\right)$ of $v_{\frac{n}{2}}$  in (5) are given by
\begin{align*}
x_{\frac{n}{2}}=\frac{1}{2}=&\sum_{k=0}^{\frac{n}{4}-1}\cos\left(\left(2k+1\right)\cdot\frac{\pi}{n}\right)\sin\left(\frac{\pi}{n}\right)\\
&+\sum_{k=0}^{\frac{n}{4}-1}b_k\sin\left(\left(2k+1\right)\cdot\frac{\pi}{n}\right)\left(\cos\delta-\cos\frac{\pi}{n}\right)\\
&+\sum_{k=0}^{\frac{n}{4}-1}b_kc_k\cos\left(\left(2k+1\right)\cdot\frac{\pi}{n}\right)\sin\delta.
\end{align*}

\begin{align*}
y_{\frac{n}{2}}=&\sum_{k=0}^{\frac{n}{4}-1}\sin\left(\left(2k+1\right)\cdot\frac{\pi}{n}\right)\sin\left(\frac{\pi}{n}\right)\\
&+\sum_{k=0}^{\frac{n}{4}-1}b_k\sin\left(\left(2k+1\right)\cdot\frac{\pi}{n}\right)\left(\sin\delta-\cos\frac{\pi}{n}\right)\\
&+\sum_{k=0}^{\frac{n}{4}-1}b_kc_k\sin\left(\left(2k+1\right)\cdot\frac{\pi}{n}\right)\cos\delta.
\end{align*}
We notice that
$$\sum_{k=0}^{\frac{n}{4}-1}\cos\left(\left(2k+1\right)\cdot\frac{\pi}{n}\right)\sin\left(\frac{\pi}{n}\right)=\frac{1}{2},$$
therefore,
$$\frac{\sin\delta}{\cos\delta-\cos\frac{\pi}{n}}=\frac{-\sum_{k=0}^{\frac{n}{4}-1}b_k\sin\left(\left(2k+1\right)\cdot\frac{\pi}{n}\right)}
{\sum_{k=0}^{\frac{n}{4}-1}b_kc_k\cos\left(\left(2k+1\right)\cdot\frac{\pi}{n}\right)}.$$
Since we assumed that $\delta\in(0,\pi)$, and $\frac{\sin x}{\cos x-\cos\frac{\pi}{n}}$ is increasing in $(0,\frac{\pi}{n})$, then we are looking for the minimum positive value of the right hand side, i.e.,
\begin{align*}
\sigma_n=&\min_{b_k=\pm 1}\mid\frac{\sum_{k=0}^{\frac{n}{4}-1}b_k\sin\left(\left(2k+1\right)\cdot\frac{\pi}{n}\right)}
{\sum_{k=0}^{\frac{n}{4}-1}b_kc_k\cos\left(\left(2k+1\right)\cdot\frac{\pi}{n}\right)}\mid\\
=&\min_{b_k=\pm 1}\frac{\mid\sum_{k=0}^{\frac{n}{4}-1}b_k\sin\left(\left(2k+1\right)\cdot\frac{\pi}{n}\right)\mid}{\sum_{k=0}^{\frac{n}{4}-1}\cos\left(\left(2k+1\right)
\cdot\frac{\pi}{n}\right)}\\
=&2\sin\frac{\pi}{n}\cdot \min_{b_k=\pm 1}\mid\sum_{k=0}^{\frac{n}{4}-1}b_k\sin\left(\left(2k+1\right)\cdot\frac{\pi}{n}\right)\mid.
\end{align*}
Let
$$M_n=\min_{b_k=\pm 1}\mid\sum_{k=0}^{\frac{n}{4}-1}b_k\sin\left((2k+1)\cdot\frac{\pi}{n}\right)\mid.$$
Therefore, we are looking forward for the estimation of $M_n$.

\begin{prop}
	Let $n=2^s\geq 4$, then $M_{4n}\leq M_n\cdot 4\sin\frac{\pi}{4n}\sin{\frac{\pi}{2n}}.$
\end{prop}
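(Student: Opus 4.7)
The plan is to exploit the $\min$-over-sign-patterns structure of $M_{4n}$: since $M_{4n}$ is a minimum, it suffices to exhibit one sign pattern $(\tilde b_0,\dots,\tilde b_{n-1})\in\{\pm 1\}^{n}$ whose weighted sum has absolute value at most $4\sin\frac{\pi}{4n}\sin\frac{\pi}{2n}\cdot M_n$. I would build such a pattern directly from a pattern $(b_0^{\ast},\dots,b_{n/4-1}^{\ast})$ realising $M_n$, so the inequality is really an inherited construction.

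Concretely, write $\theta_k=(2k+1)\pi/(4n)$ for the angles in $M_{4n}$, partition the index set $\{0,1,\dots,n-1\}$ of the $M_{4n}$ sum into $n/4$ consecutive blocks of length four via $k=4j+r$ with $j\in\{0,\dots,n/4-1\}$, $r\in\{0,1,2,3\}$, and inside the $j$-th block set
$$(\tilde b_{4j},\tilde b_{4j+1},\tilde b_{4j+2},\tilde b_{4j+3})=b_j^{\ast}\cdot(+1,-1,-1,+1).$$
The heart of the proof is then the single block identity
$$\sin\theta_{4j}-\sin\theta_{4j+1}-\sin\theta_{4j+2}+\sin\theta_{4j+3}=-4\sin\tfrac{\pi}{4n}\sin\tfrac{\pi}{2n}\sin\tfrac{(2j+1)\pi}{n}.$$
To verify it I would pair $(\theta_{4j},\theta_{4j+1})$ and $(\theta_{4j+3},\theta_{4j+2})$ and apply $\sin A-\sin B=2\cos\tfrac{A+B}{2}\sin\tfrac{A-B}{2}$ to each, pulling out the common factor $2\sin\tfrac{\pi}{4n}$ and leaving the cosine difference $\cos\tfrac{(4j+3)\pi}{2n}-\cos\tfrac{(4j+1)\pi}{2n}$; a second application of $\cos A-\cos B=-2\sin\tfrac{A+B}{2}\sin\tfrac{A-B}{2}$ then supplies the remaining factor $\sin\tfrac{\pi}{2n}\sin\tfrac{(2j+1)\pi}{n}$.

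Summing the block identity against the weights $b_j^{\ast}$ collapses the candidate sum to
$$\sum_{k=0}^{n-1}\tilde b_k\sin\theta_k=-4\sin\tfrac{\pi}{4n}\sin\tfrac{\pi}{2n}\sum_{j=0}^{n/4-1}b_j^{\ast}\sin\tfrac{(2j+1)\pi}{n},$$
and taking absolute values gives exactly $4\sin\tfrac{\pi}{4n}\sin\tfrac{\pi}{2n}\cdot M_n$, which by the defining minimum dominates $M_{4n}$. I do not anticipate a substantive obstacle beyond the trigonometric bookkeeping; the one delicate point to double-check is the choice of intra-block pattern, since the alternation $(+,-,-,+)$ is essential — any other assignment within a block leaves a cosine or a shifted angle that fails to match the $M_n$ summand, so the recursion would not close.
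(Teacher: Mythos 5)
Your proof is correct and is essentially the paper's own argument: the paper also partitions the $M_{4n}$ sum into blocks of four consecutive terms, assigns the intra-block pattern $b_j'(-1,1,1,-1)$ (the global negative of your $(+1,-1,-1,+1)$, which is immaterial under the absolute value), and telescopes each block via the same product-to-sum identities to the factor $4\sin\frac{\pi}{4n}\sin\frac{\pi}{2n}\sin\frac{(2j+1)\pi}{n}$, so the minimum defining $M_{4n}$ is dominated by $4\sin\frac{\pi}{4n}\sin\frac{\pi}{2n}\,M_n$. No substantive difference in method; your write-up is just a cleaner indexing of the same construction.
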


\begin{proof}
Let
$(b_{4k+1},b_{4k+3},b_{4k+5},b_{4k+7})=b_k^{'}(-1,1,1,-1).$
Then we have that
\begin{align*}
M_{4n}&=\min_{b_k=\pm 1}\mid\sum_{k=0}^{n-1}b_k\sin\frac{2k+1}{4n}\cdot\pi\mid\\
&\leq\min_{b_k^{'}=\pm 1}\mid\sum_{k=0}^{\frac{n}{4}-1}b_k^{'}\cdot \left(-\sin{\frac{8k+1}{4n}}\pi+\sin{\frac{8k+3}{4n}}\pi+\sin{\frac{8k+5}{4n}}\pi-\sin{\frac{8k+7}{4n}}\pi\right)\mid\\
&=\min_{b_k^{'}=\pm 1}\mid\sum_{k=0}^{\frac{n}{4}-1}b_k^{'}\cdot(2\sin\frac{\pi}{4n}\cos\frac{8k+2}{4n}\pi-2\sin\frac{\pi}{4n}\cos\frac{8k+6}{4n}\pi)\mid\\
&=\min_{b_k^{'}=\pm 1}\mid\sum_{k=0}^{\frac{n}{4}-1}b_k^{'}\cdot4\sin\frac{\pi}{4n}\sin{\frac{2\pi}{4n}}\sin\frac{8k+4}{4n}\pi\mid\\
&=\min_{b_k^{'}=\pm 1}\mid\sum_{k=0}^{\frac{n}{4}-1}b_k^{'}\cdot\sin\frac{8k+4}{4n}\pi\mid \cdot 4\sin\frac{\pi}{4n}\sin{\frac{2\pi}{4n}}\\
&=M_n\cdot 4\sin\frac{\pi}{4n}\sin{\frac{\pi}{2n}}.
\end{align*}
\end{proof}
By easy calculation, we have
$$M_4=\sin\frac{\pi}{4},$$
$$M_8=-\sin\frac{\pi}{8}+\sin\frac{3\pi}{8}=2\sin\frac{\pi}{8}\cos\frac{\pi}{4}=2\sin\frac{\pi}{4}\sin\frac{\pi}{8}.$$
By induction, we have
$$M_{2^s}\leq 2^{s-2}\prod_{k=1}^{s}\sin{\frac{\pi}{2^{k}}}.$$
Moreover, since $\sin x\leq x$, for $s\geq 2$, we have
$$M_{2^s}\leq 2^{s-2}\cdot\frac{1}{\sqrt{2}}\prod_{k=3}^{s}\frac{\pi}{2^k}= 2^{s-5/2}\frac{\pi^{s-2}}{2^{(3+s)(s-2)/2}}=\frac{\pi^{s-2}}{2^{(s^2-s-1)/2}}.$$
Take $n=2^s$, we have
$$M_n\leq \frac{\sqrt{2}}{\pi^2}\cdot\frac{\pi^{\log_{2}{n}}}{n^{\frac{\log_{2}{n}-1}{2}}}.$$
That is,
\begin{prop}\label{prop:esti}
	$$M_n\leq \frac{\sqrt{2}}{\pi^2}\cdot\frac{\pi^{\log_{2}{n}}}{n^{\frac{\log_{2}{n}-1}{2}}}.$$
\end{prop}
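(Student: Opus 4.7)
The plan is to chain the recursion $M_{4n}\leq 4M_n\sin\frac{\pi}{4n}\sin\frac{\pi}{2n}$ established in the previous proposition with the base values $M_4=\sin(\pi/4)$ and $M_8=2\sin(\pi/4)\sin(\pi/8)$ computed just above, producing a closed-form product-of-sines upper bound for $M_{2^s}$, and then to loosen it into the advertised polynomial-exponential form via the elementary inequality $\sin x\leq x$.

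Concretely, I would first prove by induction on $s\geq 2$, running in steps of two (since the recursion sends $n$ to $4n$, i.e.\ $s$ to $s+2$), that
$$M_{2^s}\leq 2^{s-2}\prod_{k=1}^{s}\sin\frac{\pi}{2^k}.$$
The cases $s=2,3$ serve as base cases and follow immediately from the explicit evaluations of $M_4$ and $M_8$. For the inductive step, the recursion applied at $n=2^s$ gives
$$M_{2^{s+2}}\leq M_{2^s}\cdot 4\sin\frac{\pi}{2^{s+2}}\sin\frac{\pi}{2^{s+1}}\leq 2^{s}\prod_{k=1}^{s+2}\sin\frac{\pi}{2^k},$$
which is the statement at level $s+2$. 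Next I would peel off the two factors $\sin(\pi/2)=1$ and $\sin(\pi/4)=1/\sqrt{2}$ from the product and apply $\sin x\leq x$ to each of the remaining factors, getting
$$\prod_{k=1}^{s}\sin\frac{\pi}{2^k}\leq\frac{1}{\sqrt{2}}\prod_{k=3}^{s}\frac{\pi}{2^k}=\frac{\pi^{s-2}}{\sqrt{2}\cdot 2^{(s+3)(s-2)/2}},$$
using $\sum_{k=3}^{s}k=(s+3)(s-2)/2$. Substituting back into the induction bound collapses the estimate to $M_{2^s}\leq \pi^{s-2}/2^{(s^2-s-1)/2}$; finally, setting $s=\log_2 n$ and rewriting $2^{(s^2-s-1)/2}=n^{(\log_2 n-1)/2}/\sqrt{2}$ gives exactly the inequality claimed in the proposition.

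The proof is essentially pure bookkeeping once the contraction lemma is in hand; there is no real obstacle. The only mildly subtle point is that the recursion advances $s$ by two rather than by one, which is why two base cases ($s=2$ and $s=3$) are needed to cover both parities of $s$, and why one must be slightly careful in tracking the exponents of $2$ and $\pi$ when simplifying the final expression.
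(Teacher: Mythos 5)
Your proof is correct and matches the paper's argument essentially step for step: the same induction $M_{2^s}\leq 2^{s-2}\prod_{k=1}^{s}\sin\frac{\pi}{2^k}$ driven by the $M_{4n}$ recursion with $M_4$, $M_8$ as base cases for the two parities, followed by the same application of $\sin x\leq x$ and the identical exponent bookkeeping. Your explicit remark that two base cases are needed because the recursion advances $s$ by two is a small clarification the paper leaves implicit, but the substance is identical.
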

Now we prove Theorem \ref{theo:main}.

\begin{proof}[Proof of Theorem \ref{theo:main}]
By Proposition \ref{prop:esti}, let
$$\sigma_n=\min_{b_k=\pm 1}\mid\frac{\sum_{k=0}^{\frac{n}{4}-1}b_k\sin\left(\left(2k+1\right)\cdot\frac{\pi}{n}\right)}
{\sum_{k=0}^{\frac{n}{4}-1}b_kc_k\cos\left(\left(2k+1\right)\cdot\frac{\pi}{n}\right)}\mid,$$
and let $\delta_n$ be the solution of
$$\sigma_n\left(\cos x-\cos\frac{\pi}{n}\right)=\sin x.$$
This equation has a solution $\delta_n$ satisfying
$$\delta_n=\arccos\left(\frac{\sigma_n}{\sqrt{1+\sigma_n^2}}\cos{\frac{\pi}{n}}\right)-\arccos\frac{\sigma_n}{\sqrt{1+\sigma_n^2}}.$$
Since $n$ is large, $\cos\delta_n\rightarrow 1$, $\sin\delta_n\rightarrow \delta_n$, then
$$\delta_n\leq (1-\cos\frac{\pi}{n})\sigma_n=2\sin^2\left(\frac{\pi}{2n}\right)\sigma_n.$$
And $\sigma_n$ is sufficiently smaller than $\frac{\pi}{n}$, we have
$$\delta_n\leq O\left(\frac{\pi^2}{2n^2}\right)\sigma_n\leq O\left(\frac{\pi^3}{n^3}\right)M_n\leq O\left( \frac{\pi^{\log_{2}{n}}}{n^{\frac{\log_{2}{n}+5}{2}}}\right).$$

\begin{figure}[htbp]
\begin{minipage}[htbp]{0.55\linewidth}
\centering
\includegraphics[height=6cm,width=6cm]{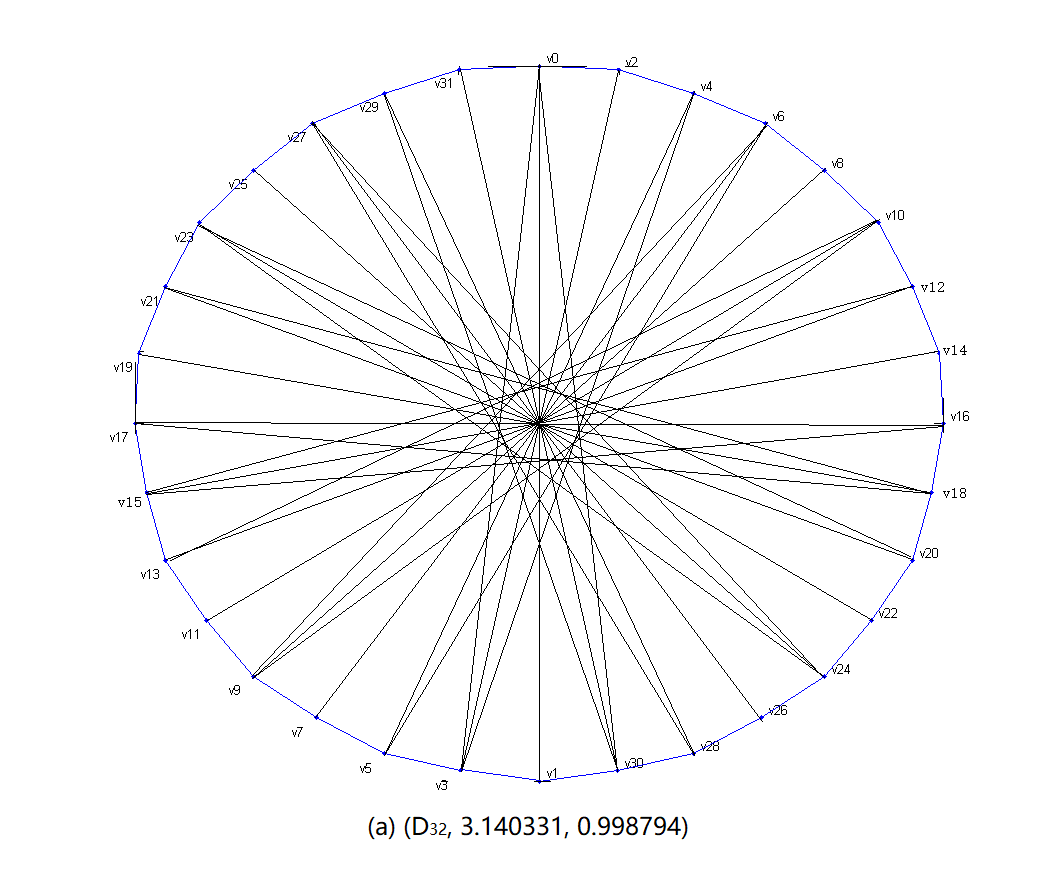}
\end{minipage}%
\hfill
\begin{minipage}[htbp]{0.55\linewidth}
\centering
\includegraphics[height=6cm,width=6cm]{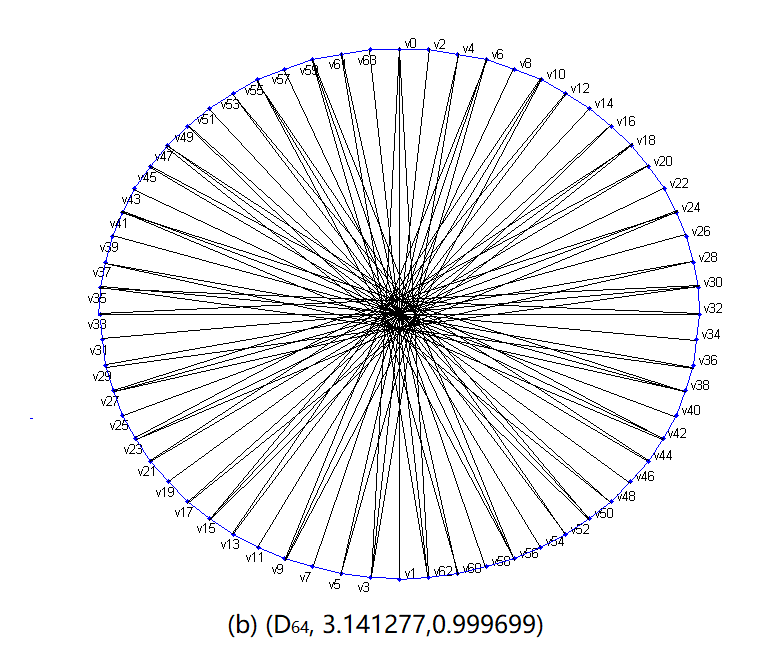}
\end{minipage}
\caption{Polygons $(D_n, L(D_n), W(D_n))$ defined in Theorem \ref{theo:main}: (a) $32$-gon; (b) $64$-gon.}
\end{figure}
Due to Section \ref{section}, Proposition \ref{prop:sum} and \ref{prop:change}, there exists a convex small $n$-gon $D_n$ obtained by setting $\delta=\delta_n$ such that
   $$L(D_n)=2n\sin\frac{\pi}{2n}\cos\left(\frac{1}{2}\arccos\left(\frac{\sigma_n}{\sqrt{1+\sigma_n^2}}\cos{\frac{\pi}{n}}\right)-\frac{1}{2}\arccos\frac{\sigma_n}{\sqrt{1+\sigma_n^2}}\right),
   $$
  and
$$\overline{L}_n-L(D_n)\leq O\left(\frac{\pi^{\log_2n^2}}{n^{\log_2n+5}}\right).$$
For the width part, we just calculate $W(D_n)$, i.e.,
 $$W(D_n)=\cos\left(\frac{\pi}{2n}+\frac{1}{2}\arccos\left(\frac{\sigma_n}{\sqrt{1+\sigma_n^2}}\cos{\frac{\pi}{n}}\right)-\frac{1}{2}\arccos\frac{\sigma_n}{\sqrt{1+\sigma_n^2}}\right),$$
and
$$\overline{W}_n-W(D_n)\leq O\left(\frac{\pi^{\log_2n^2}}{n^{\log_2n+7}}\right).$$
\end{proof}

\subsection{Some numerical results}

Due to Theorem \ref{theo:main} and Section \ref{section}, we illustrate $D_n$ for some $n$ in Figure $6$.
\begin{table}[htb]
\small
\centering
\caption{Perimeters of $D_n$}
\label{table1}
\begin{tabular}{ccccc}
\hline
$n$&$L(R_n)$&$L(C_n)$&$L(D_n)$&$\overline{L}_n$\\
\hline
16  &3.121445152258052&3.136547508015487&$\geq$3.136547508015487&3.136548490545939\\
32  &3.136548490545939&3.140331153461366&$\geq$3.140331156355381&3.140331156954753\\
64  &3.140331156954753&3.141277250919435&$\geq$3.141277250932682&3.141277250932773\\
128 &3.141277250932773&3.141513801144249&$\geq$3.141513801144301&3.141513801144301\\
\hline
\end{tabular}
\end{table}

\begin{table}[htb]
\centering
\caption{Widths of $D_n$}
\label{table2}
\begin{tabular}{ccccc}
\hline
$n$&$W(R_n)$&$W(C_n)$&$W(D_n)$&$\overline{W}_n$\\
\hline
16  &0.980785280403230&0.995106832387674&$\geq$0.995106832387674&0.995184726672197\\
32  &0.995184726672197&0.998793140652984&$\geq$0.998794497340913&0.998795456205172\\
64  &0.998795456205172&0.999698747175479&$\geq$0.999698812803775&0.999698818696204\\
128 &0.999698818696204&0.999924699610472&$\geq$0.999924701821059&0.999924701839145\\
\hline
\end{tabular}
\end{table}

Table \ref{table1} shows the perimeters of $D_n$, along with the upper bounds $\overline{L}_n$, the perimeters of polygons $R_n$ and $C_n$. 

Table \ref{table2} shows the widths of $D_n$, along with the upper bounds $\overline{W}_n$, the widths of $R_n$ and $C_n$.

As a remark, one can furtherly improve the value by changing $c_k\in\{1,-1\}$ with $c_k\in[-1,1]$.

\section*{Acknowledge}
The second author, the third author and the fourth author are supported by the National Nature Science Foundation of China
(NSFC 11921001) and the National Key Research and Development Program of China (2018YFA0704701).

\end{document}